\newtheorem{theorem}{Theorem}
\newtheorem{remark}[theorem]{Remark}
\newtheorem{lemma}[theorem]{Lemma}
\newtheorem{assumption}{Assumption}
\newcommand{\Ac}{\boldsymbol{A}_{\mathrm{C}}}
\newcommand{\Afoil}{\boldsymbol{A}_{\mathrm{foil}}}
\newcommand{\Ai}{\boldsymbol{A}_{\mathrm{I}}}
\newcommand{\Al}{\boldsymbol{A}_{\mathrm{L}}}
\newcommand{\Ar}{\boldsymbol{A}_{\mathrm{R}}}
\newcommand{\Asrc}{\boldsymbol{A}_{\mathrm{src}}}
\newcommand{\Astr}{\boldsymbol{A}_{\mathrm{str}}}
\newcommand{\Av}{\boldsymbol{A}_{\mathrm{V}}}
\newcommand{\alphadomain}{L_\alpha}
\newcommand{\Bmat}{\mathcal{B}}
\newcommand{\capacitancematrix}{\boldsymbol{C}}
\newcommand{\conductancematrix}{\boldsymbol{G}}
\newcommand{\controlinput}{\boldsymbol{u}}
\newcommand{\controloutput}{\boldsymbol{y}}
\newcommand{\coordinatetransform}{\boldsymbol{f}}
\newcommand{\current}{\imath}
\newcommand{\currentvector}{\boldsymbol{\imath}}
\newcommand{\cvector}{\boldsymbol{c}}
\DeclareMathOperator{\Diag}{Diag}
\newcommand{\diff}{\mathrm{d}}
\newcommand{\discretescalarpotential}{\boldsymbol{e}}
\newcommand{\discretevectorpotential}{\boldsymbol{a}}
\newcommand{\discretesourcecurrent}{\boldsymbol{j}_{\mathrm{s}}}
\newcommand{\effort}{\boldsymbol{e}} 
\newcommand{\Emat}{\mathcal{E}}
\newcommand{\FmatSkew}{\mathcal{F}_\text{skew}}
\newcommand{\FmatSym}{\mathcal{F}_\text{sym}}
\newcommand{\finaltime}{t_{\mathrm{f}}}
\newcommand{\foilthickness}{b}
\newcommand{\hamiltonian}{\mathcal{H}}
\newcommand{\inductancematrix}{\boldsymbol{L}}
\newcommand{\jl}{\boldsymbol{\jmath}_{\mathrm{L}}}
\newcommand{\jsol}{\boldsymbol{\jmath}_{\mathrm{sol}}}
\newcommand{\jsrc}{\boldsymbol{\jmath}_{\mathrm{src}}}
\newcommand{\jv}{\boldsymbol{\jmath}_{\mathrm{V}}}
\newcommand{\Jmat}{\mathcal{J}}
\newcommand{\Mmat}{\mathcal{M}}
\newcommand{\massmatrix}{\boldsymbol{M}_\sigma}
\newcommand{\np}{n_p}
\newcommand{\nw}{n_w}
\newcommand{\nstr}{n_\text{str}}
\newcommand{\nsol}{n_\text{sol}}
\newcommand{\nfoil}{n_\text{foil}}
\newcommand{\pbasisfunction}{p}
\newcommand{\R}{\mathbb{R}}
\newcommand{\resistancematrix}{\boldsymbol{R}}
\newcommand{\Rmat}{\mathcal{R}}
\newcommand{\state}{\boldsymbol{z}}
\newcommand{\stiffnessmatrix}{\boldsymbol{K}_\nu}
\newcommand{\transpose}{^\top}
\newcommand{\vertexpotentials}{\boldsymbol{\phi}}
\newcommand{\voltage}{v}
\newcommand{\voltagevector}{\boldsymbol{v}}
\newcommand{\wbasisfunction}{\vec{w}}
\newcommand{\Xsigma}{\boldsymbol{X}_\text{foil}}
\newcommand{\Xsol}{\boldsymbol{X}_\textrm{sol}}
\newcommand{\Xstr}{\boldsymbol{X}_\textrm{str}}
\newcommand{\xvector}{\boldsymbol{x}}
\newcommand{\ddt}{\frac{\diff}{\diff t}}
\newcommand{\tddt}{\tfrac{\diff}{\diff t}}
\newcommand{\integral}[4]{\int\limits_{#1}^{#2} #3\,\diff #4}
\newcommand{\norm}[1]{\bigl\lVert#1\bigr\lVert}
\journal{Applied Mathematical Modelling}
\begin{document}

\begin{frontmatter}

\title{Energy-based modeling for field--circuit coupling}

\author[umag]{Robert Altmann}
\author[tue]{Idoia Cortes Garcia}
\author[tuda,tau]{Elias Paakkunainen\corref{cor1}}
\author[tube,upot]{Philipp~Schulze}
\author[tuda]{Sebastian Schöps}

\cortext[cor1]{Corresponding author: Elias Paakkunainen (e-mail: elias.paakkunainen@tu-darmstadt.de).}

\affiliation[umag]{organization={Institute of Analysis and Numerics, Otto von Guericke University Magdeburg},
            city={Magdeburg},
            postcode={39106}, 
            country={Germany}}

\affiliation[tue]{organization={{Department of Mechanical Engineering, Eindhoven University of Technology}},
            city={{Eindhoven}},
            postcode={{5600 MB}}, 
            country={{The Netherlands}}}

\affiliation[tuda]{organization={Institute for Accelerator Science and Electromagnetic Fields, Technical University of Darmstadt},
            city={Darmstadt},
            postcode={64289}, 
            country={Germany}}

\affiliation[tau]{organization={Electrical Engineering Unit, Tampere University},
            city={Tampere},
            postcode={33720}, 
            country={Finland}}

\affiliation[tube]{organization={Institute of Mathematics, Technische Universität Berlin},
			city={Berlin},
			postcode={10623}, 
			country={Germany}}

\affiliation[upot]{organization={Institute of Mathematics, University of Potsdam},
			city={Potsdam},
			postcode={14476}, 
			country={Germany}}

\begin{abstract}
This paper presents a generalized energy-based modeling framework extending recent formulations tailored for differential--algebraic equations. The proposed structure, inspired by the port-Hamiltonian formalism, ensures passivity, preserves the power balance, and facilitates the consistent interconnection of subsystems. A particular focus is put on low-frequency power applications in electrical engineering. Stranded, solid, and foil conductor models are investigated in the context of the eddy current problem. Each conductor model is shown to fit into the generalized energy-based structure, which allows their structure-preserving coupling with electrical circuits described by modified nodal analysis. Theoretical developments are validated through a numerical simulation of an oscillator circuit, demonstrating energy conservation in lossless scenarios and controlled dissipation when eddy currents are present. The applicability of the methodology towards engineering applications is studied through a numerical simulation of a nonlinear three-phase transformer.
\end{abstract}

\begin{keyword}
energy-based modeling\sep 
field--circuit problem\sep 
conductor model\sep
three-phase transformer\sep
eddy current problem\sep
differential--algebraic equations
\end{keyword}

\end{frontmatter}

\section{Introduction}
\label{sec:intro}

In recent years, energy-based modeling via port-Hamiltonian (pH) formulations has received more and more attention; cf.~\cite{MehU23,SchJ14} for a general overview.
The pH framework generalizes classical Hamiltonian systems by allowing for energy dissipation as well as energy exchange with the environment.
Moreover, the corresponding structure implies a power balance and ensures passivity, i.e., the system may not internally generate energy.
Often, the power balance also implies (Lyapunov) stability, e.g., if the Hamiltonian is a squared norm of the state.
Furthermore, power-preserving interconnections of pH systems result in an overall system which is again pH. This makes the framework especially suitable for network modeling~\cite{FiaZOSS13,Sch04} and control \cite{MalRSH23,Sch20}. PH structures have been identified in various application areas, 
including chemistry \cite{HoaCJL11,TefDP22},
Maxwell equations \cite{Farle_2013aa,Haine_2022aa}, 
electrical engineering \cite{GerHRS21,Bartel_2024aa,CleHKG24}, 
continuum mechanics \cite{MacM04,WarBST21,RasS25},
and fluid mechanics \cite{AltS17}.
Moreover, structure-preserving discretization \cite{KotL19,BruRS22,GieKT24} and model order reduction techniques \cite{BreMS22,GugPBS12} play an important role.

Recently, a new representation of energy-based models has been introduced in \cite{AltS25} that appears to be especially suitable for differential--algebraic equation systems (DAE).
Similarly to classical pH formulations, it implies a power balance, is preserved under power-preserving interconnections, and allows for structure-preserving discretization and model reduction.
Moreover, in some applications, it allows to obtain energy-based formulations with fewer state variables compared to the pH-DAE formulations presented in \cite{BeaMXZ18,MehM19}.
In addition, the new framework also includes systems of index 3, whereas the linear pH-DAE formulation in \cite{BeaMXZ18} is restricted to systems with an index of at most 2, cf.~\cite{MehMW18}. 

This paper introduces a generalization of the original energy-based framework to account for practically relevant models from electrical engineering, i.e., modified nodal analysis (MNA) for the description of electric circuits including lumped elements and refined models describing three-dimensionally resolved electromagnetic fields based on (quasistatic) Maxwell's equations. Such coupled systems arise in various practical situations, e.g., when simulating electric machines or accelerator circuits; see~\cite{Arkkio_1987aa, Bortot_2018ab}. Mathematically, the field--circuit coupling is established by prolongation and restriction operators which can be related to conductor models, e.g., stranded, solid, or foil models \cite{Bedrosian_1993aa,De-Gersem_2001aa,Schops_2013aa,Paakkunainen_2024aa}. All models, i.e., circuit and field, are known to be DAEs of index up to two depending on the circuit topology; see \cite{Estevez-Schwarz_2000aa,Bartel_2011aa, Cortes-Garcia_2019al}. \smallskip

The main contributions of this paper are summarized in the following.
\begin{itemize}
    \item We extend the energy-based representation from \cite{AltS25} by incorporating a possibly singular coefficient matrix in front of the time derivative of a part of the state vector. Moreover, we demonstrate that the structure of this new representation guarantees a corresponding dissipation inequality and is preserved under power-preserving and dissipative interconnections. In addition, we show for a special sub-class with quadratic Hamiltonian that applying the implicit midpoint rule leads to a system with guaranteed dissipation inequality on the time-discrete level.
    \item We consider (linear) finite element models for stranded, solid, and foil conductors and show how to formulate them in terms of the new energy-based representation. Similarly, we demonstrate that a general linear circuit model based on MNA also fits into the framework. Furthermore, we use the energy-based formulations of the single models and the interconnection property of the structure to argue that a coupled system consisting of several conductors and a circuit is also of the same energy-based structure.
    \item We present numerical examples to validate the theoretical considerations. Field--circuit coupled problems with different conductor models are examined to confirm the predictions and to demonstrate applicability towards engineering applications. The open source model implementations are publicly available.
\end{itemize}
\section{Energy-Based Modeling}
\label{sec:theory}

In \cite{AltS25}, the authors have introduced a structured energy-based formulation which is especially suitable for DAEs. This formulation combines different approaches in a hybrid way, namely classical input--state--output pH systems as well as gradient systems. In this paper, we propose a generalization that allows for additional algebraic components in the variables, as this is important for the MNA model to be investigated in \Cref{sec:conductor:MNA}. The new formulation reads
\begin{equation}
\label{eq:moregeneralstructure}
	\begin{bmatrix}
		\nabla_{\state_1}\hamiltonian(\state_1,\state_2)\\
		\Emat\ddt{\state}_2\\
		0
	\end{bmatrix}
	= (\Jmat-\Rmat)
	\begin{bmatrix}
		\ddt{\state}_1\\
		\effort(\state_1,\state_2)\\
		\state_3
	\end{bmatrix}
	+\Bmat\controlinput
\end{equation}
with state $\state = (\state_1,\state_2,\state_3)\in C^1([t_0,\finaltime],\R^{n_1}\times\R^{n_2}\times\R^{n_3})$, sufficiently smooth input $\controlinput\colon [t_0,\finaltime]\to\R^m$, Hamiltonian $\hamiltonian\in C^1(\R^{n_1}\times\R^{n_2})$, effort $\effort\in C(\R^{n_1}\times\R^{n_2},\R^{n_2})$, and coefficient matrices $\Emat\in\R^{n_2\times n_2}$, $\Jmat,\Rmat\in\R^{n\times n}$, $\Bmat\in\R^{n\times m}$ with total dimension $n\vcentcolon= n_1+n_2+n_3$. 
The so-called structure matrix $\Jmat$ describes the energy flux among energy storage elements and is assumed to be skew-symmetric. On the other hand, the dissipation matrix $\Rmat$ models an energy loss or dissipation and is assumed to be symmetric and positive semi-definite. Hence, we have 
\[
    \Jmat = -\Jmat\transpose, \qquad 
    \Rmat = \Rmat\transpose \ge 0.
\]
Finally, $\Bmat$ is called port matrix and includes the input, which can also be used for interconnections of different subsystems. Moreover, we assume that 
\[
    \Emat\transpose \effort(\state_1,\state_2) 
    = \nabla_{\state_2}\hamiltonian(\state_1,\state_2),
\]
which is inspired by the port-Hamiltonian structure introduced in \cite{MehM19} and implies that $\nabla_{\state_2}\hamiltonian(\state_1,\state_2)$ is in the column space of $\Emat\transpose$. In addition, we have an equation for the output $\controloutput\in C([t_0,\finaltime],\R^m)$, given by 
\begin{equation}
    \label{eq:outputEquation}
	\controloutput
	= \Bmat\transpose
	\begin{bmatrix}
		\ddt{\state}_1\\
		\effort(\state_1,\state_2)\\
		\state_3
	\end{bmatrix}.		
\end{equation}
\begin{remark}
The matrix $\Emat$ is not necessarily invertible. However, if $\Emat$ equals the identity, then $\effort(\state_1,\state_2) = \nabla_{\state_2}\hamiltonian(\state_1,\state_2)$ and we regain the basic model from~\cite{AltS25}, namely
\begin{equation}
    \label{eq:generalstructure}
    \begin{bmatrix}
        \nabla_{\state_1}\hamiltonian(\state_1,\state_2)\\
        \ddt{\state}_2\\
        0
    \end{bmatrix}
    = (\Jmat-\Rmat)
    \begin{bmatrix}
        \ddt{\state}_1\\
        \nabla_{\state_2}\hamiltonian(\state_1,\state_2)\\
        \state_3
    \end{bmatrix}
    +\Bmat\controlinput.
\end{equation}
Without the variables $\state_1$ and $\state_3$, system~\eqref{eq:generalstructure} equals a classical input--state--output pH system. On the other hand, with vanishing $\state_2$ and $\state_3$, we obtain gradient systems, cf.~\cite{EggHS21}. The state~$\state_3$ facilitates the inclusion of constraints and is especially useful for DAE models. 
\end{remark}
\begin{remark}
    For simplicity, we assume that the complete state vector is continuously differentiable.
    However, since $\state_3$ only occurs algebraically in the equations and since $\Emat$ and the first block column of $\Jmat-\Rmat$ may have non-trivial kernels, certain components of the state vector do not necessarily have to be continuously differentiable.
\end{remark}
\subsection{Power Balance and Interconnections}
We first prove that systems of the form~\eqref{eq:moregeneralstructure}--\eqref{eq:outputEquation} follow the typical power balance. 
\begin{theorem}[Energy dissipation] 
\label{thm:energy_dissipation}
The energy satisfies $\ddt \hamiltonian \le \langle \controloutput, \controlinput\rangle$, where $\langle\cdot,\cdot\rangle$ denotes the Euclidean inner product. In particular, system~\eqref{eq:moregeneralstructure}--\eqref{eq:outputEquation} is energy dissipative for $\controlinput = 0$. 
\end{theorem}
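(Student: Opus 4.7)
The plan is to compute $\tfrac{\diff}{\diff t}\hamiltonian$ directly via the chain rule and then recognize the resulting expression as an inner product that matches the left-hand side of the defining equation~\eqref{eq:moregeneralstructure}, after which the skew-symmetry of $\Jmat$, the positive semi-definiteness of $\Rmat$, and the definition~\eqref{eq:outputEquation} of the output will do the rest.

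Concretely, I would first write
\[
    \ddt \hamiltonian(\state_1,\state_2)
    = \nabla_{\state_1}\hamiltonian\transpose \ddt \state_1
    + \nabla_{\state_2}\hamiltonian\transpose \ddt \state_2
\]
and then substitute the compatibility assumption $\Emat\transpose \effort = \nabla_{\state_2}\hamiltonian$ into the second term to obtain $\nabla_{\state_2}\hamiltonian\transpose \ddt \state_2 = \effort\transpose \Emat \ddt \state_2$. This is the key trick that accommodates the (possibly singular) matrix $\Emat$: without invertibility one cannot isolate $\ddt \state_2$, but the assumed column-space condition lets one absorb $\Emat$ into the inner product on the right factor. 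The resulting identity can be rewritten as
\[
    \ddt \hamiltonian
    = \begin{bmatrix}\ddt \state_1 \\ \effort \\ \state_3\end{bmatrix}\transpose
      \begin{bmatrix}\nabla_{\state_1}\hamiltonian \\ \Emat\ddt \state_2 \\ 0\end{bmatrix},
\]
where the zero third block is inserted freely since the corresponding factor $\state_3$ is paired with it.

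Next, I would substitute the defining equation~\eqref{eq:moregeneralstructure} for the right factor, yielding
\[
    \ddt \hamiltonian
    = \begin{bmatrix}\ddt \state_1 \\ \effort \\ \state_3\end{bmatrix}\transpose
      (\Jmat-\Rmat)\begin{bmatrix}\ddt \state_1 \\ \effort \\ \state_3\end{bmatrix}
    + \begin{bmatrix}\ddt \state_1 \\ \effort \\ \state_3\end{bmatrix}\transpose \Bmat\,\controlinput.
\]
The quadratic form with $\Jmat$ vanishes by skew-symmetry, the quadratic form with $\Rmat$ is non-positive by $\Rmat=\Rmat\transpose\ge 0$, and the port term equals $\controlinput\transpose \controloutput = \ip{\controloutput}{\controlinput}$ by the output equation~\eqref{eq:outputEquation}. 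Combining these three observations gives
\[
    \ddt \hamiltonian
    = -\begin{bmatrix}\ddt \state_1 \\ \effort \\ \state_3\end{bmatrix}\transpose \Rmat
       \begin{bmatrix}\ddt \state_1 \\ \effort \\ \state_3\end{bmatrix}
      + \ip{\controloutput}{\controlinput}
    \le \ip{\controloutput}{\controlinput},
\]
and the claim for $\controlinput=0$ follows immediately.

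The only subtle point, and thus the main thing to get right, is the handling of the possibly singular $\Emat$: one must avoid writing $\nabla_{\state_2}\hamiltonian = $ something contracted with $\ddot{}$ involving $\Emat^{-1}$, and instead insist on pairing $\effort$ with $\Emat\ddt\state_2$ via the transpose identity. Once this algebraic bookkeeping is set up, the proof is a one-line consequence of the structural properties of $\Jmat$, $\Rmat$, and $\Bmat$.
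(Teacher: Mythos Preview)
Your proposal is correct and follows essentially the same approach as the paper's own proof: compute $\ddt\hamiltonian$ by the chain rule, use the compatibility condition $\Emat\transpose\effort=\nabla_{\state_2}\hamiltonian$ to rewrite the second term as $\effort\transpose\Emat\ddt\state_2$, recognize the result as the inner product of the two block vectors appearing in~\eqref{eq:moregeneralstructure}, substitute, and conclude via the skew-symmetry of $\Jmat$, the positive semi-definiteness of $\Rmat$, and the output equation~\eqref{eq:outputEquation}. Your emphasis on avoiding $\Emat^{-1}$ and instead pairing $\effort$ with $\Emat\ddt\state_2$ is exactly the point the paper's calculation relies on.
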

\begin{proof}
A direct calculation shows
\begin{align*}
	\ddt \hamiltonian
	&= \big\langle \nabla_{\state_1}\hamiltonian(\state_1,\state_2), \tddt{\state}_1 \big\rangle 
	+ \big\langle \effort(\state_1,\state_2), \Emat\tddt{\state}_2 \big\rangle \\
	&= \Bigg\langle \begin{bmatrix} \ddt{\state}_1 \\ \effort(\state_1,\state_2) \\ \state_3 \end{bmatrix}, \begin{bmatrix} \nabla_{\state_1}\hamiltonian(\state_1,\state_2) \\ \Emat \ddt{\state}_2 \\ 0 \end{bmatrix} \Bigg\rangle \\
	&= - \Bigg\langle \begin{bmatrix} \ddt{\state}_1 \\ \effort(\state_1,\state_2) \\ \state_3 \end{bmatrix}, \Rmat \begin{bmatrix} \ddt{\state}_1 \\ \effort(\state_1,\state_2) \\ \state_3 \end{bmatrix} \Bigg\rangle 
	+ 
	\Bigg\langle \begin{bmatrix} \ddt{\state}_1 \\ \effort(\state_1,\state_2) \\ \state_3 \end{bmatrix}, \Bmat \controlinput \Bigg\rangle
	\le \langle \controloutput, \controlinput \rangle.  	
\end{align*}
The special case $\controlinput = 0$ clearly yields $\ddt \hamiltonian \le 0$.
\end{proof}
A second important property is that systems of the form~\eqref{eq:moregeneralstructure}--\eqref{eq:outputEquation} can be coupled, leading again to a system of the same structure. 
\begin{theorem}[Structure-preserving interconnection]
\label{th:interconnection}	
Consider two systems of the form~\eqref{eq:moregeneralstructure}--\eqref{eq:outputEquation}, namely  
\begin{align*}
	\begin{bmatrix}
		\partial_{1} \hamiltonian^{[i]} \\
		\Emat^{[i]}\ddt{\state}^{[i]}_2\\
		0
	\end{bmatrix}
	&= 
	\big( \Jmat^{[i]} - \Rmat^{[i]} \big) 
	\begin{bmatrix}
		\ddt{\state}^{[i]}_1\\
		\effort^{[i]}(\state_1^{[i]},\state_2^{[i]})\\
		\state_3^{[i]}
	\end{bmatrix}
	+ 
	\Bmat^{[i]} \controlinput^{[i]}, \\
	\controloutput^{[i]} 
	&= 
	\Bmat^{[i]\, \top}
	\begin{bmatrix}
		\ddt{\state}^{[i]}_1\\
		\effort^{[i]}(\state_1^{[i]},\state_2^{[i]})\\
		\state_3^{[i]}
	\end{bmatrix}
\end{align*}
with respective states $\state^{[1]}$, $\state^{[2]}$, energy functions $\hamiltonian^{[1]}$, $\hamiltonian^{[2]}$, efforts $\effort^{[1]}$, $\effort^{[2]}$, and the short notation $\partial_{k} \hamiltonian^{[i]} = \nabla_{\state_k^{[i]}}\hamiltonian^{[i]}(\state_1^{[i]},\state_2^{[i]})$. 
Then an interconnection of the form  
\[
	\begin{bmatrix} 
		\controlinput^{[1]} \\ 
		\controlinput^{[2]} 
	\end{bmatrix}
	= 
	\big( \FmatSkew - \FmatSym \big)
	\begin{bmatrix} 
		\controloutput^{[1]} \\ 
		\controloutput^{[2]} 
	\end{bmatrix}
	+ 
	\begin{bmatrix} 
		\tilde \controlinput^{[1]} \\ 
		\tilde \controlinput^{[2]} 
	\end{bmatrix}
\]
with $\FmatSkew=-\FmatSkew\transpose$ and positive semi-definite $\FmatSym=\FmatSym\transpose$ yields again a system of the form~\eqref{eq:moregeneralstructure}--\eqref{eq:outputEquation}. 	
\end{theorem}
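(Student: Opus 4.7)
The plan is to first stack the two subsystems into one and then absorb the interconnection into a new $(\Jmat,\Rmat,\Bmat)$ triple. Concatenating the states via $\state_k \vcentcolon= ((\state_k^{[1]})\transpose,(\state_k^{[2]})\transpose)\transpose$ for $k=1,2,3$, defining the joint Hamiltonian $\hamiltonian(\state_1,\state_2) \vcentcolon= \hamiltonian^{[1]} + \hamiltonian^{[2]}$ and the joint effort $\effort \vcentcolon= ((\effort^{[1]})\transpose,(\effort^{[2]})\transpose)\transpose$, and taking $\Emat$, $\Jmat_0$, $\Rmat_0$, $\Bmat_0$ as the block diagonals of the subsystem coefficient matrices, one obtains a system of the form~\eqref{eq:moregeneralstructure} driven by the stacked input $(\controlinput^{[1]},\controlinput^{[2]})$. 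The compatibility identity $\Emat\transpose\effort = \nabla_{\state_2}\hamiltonian$ is inherited componentwise, and skew-symmetry of $\Jmat_0$ together with symmetric positive semi-definiteness of $\Rmat_0$ follow from the block-diagonal construction.

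Next, I would substitute the feedback law. Writing the flow vector $v \vcentcolon= [(\tddt\state_1)\transpose, \effort\transpose, \state_3\transpose]\transpose$, the combined output reads $(\controloutput^{[1]},\controloutput^{[2]}) = \Bmat_0\transpose v$, so inserting the interconnection into the stacked input gives
\[
  \Bmat_0\begin{bmatrix}\controlinput^{[1]}\\\controlinput^{[2]}\end{bmatrix}
  = \Bmat_0(\FmatSkew-\FmatSym)\Bmat_0\transpose\, v
  + \Bmat_0\begin{bmatrix}\tilde\controlinput^{[1]}\\\tilde\controlinput^{[2]}\end{bmatrix}.
\]
Absorbing the first summand into the coefficient matrix yields a closed-loop system of the form~\eqref{eq:moregeneralstructure} with modified matrices $\Jmat_{\mathrm{new}} \vcentcolon= \Jmat_0 + \Bmat_0\FmatSkew\Bmat_0\transpose$ and $\Rmat_{\mathrm{new}} \vcentcolon= \Rmat_0 + \Bmat_0\FmatSym\Bmat_0\transpose$, port matrix $\Bmat_{\mathrm{new}} \vcentcolon= \Bmat_0$, and external input $\tilde\controlinput \vcentcolon= (\tilde\controlinput^{[1]},\tilde\controlinput^{[2]})$. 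A brief transpose computation confirms that the congruences $\Bmat_0\FmatSkew\Bmat_0\transpose$ and $\Bmat_0\FmatSym\Bmat_0\transpose$ preserve the skew and symmetric positive semi-definite properties, respectively.

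The only remaining step is to reorder the stacked variables so that all $\state_1$-, $\state_2$-, and $\state_3$-components form contiguous blocks matching the pattern of~\eqref{eq:moregeneralstructure}. I expect this permutation to be the main source of fiddliness — not a conceptual obstacle, but a step that must be carried out consistently on rows and columns. Since conjugation by a permutation matrix is an orthogonal congruence, skew-symmetry of $\Jmat_{\mathrm{new}}$, positive semi-definiteness of $\Rmat_{\mathrm{new}}$, and the compatibility relation for $\Emat$ are all preserved. The output equation~\eqref{eq:outputEquation} of the interconnected system then reads off directly as $\tilde\controloutput = \Bmat_{\mathrm{new}}\transpose v$, completing the verification.
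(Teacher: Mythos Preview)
Your proposal is correct and follows essentially the same route as the paper: stack the states and form the block-diagonal Hamiltonian, effort, and coefficient matrices, then substitute the feedback to obtain $\Jmat_0+\Bmat_0\FmatSkew\Bmat_0\transpose$ and $\Rmat_0+\Bmat_0\FmatSym\Bmat_0\transpose$. The only cosmetic difference is that the paper writes out the permuted block matrices explicitly from the start rather than applying the reordering at the end.
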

\begin{proof}
We define the new states
\[
	\state_1 
	\coloneqq 
	\begin{bmatrix} 
		\state_1^{[1]} \\ 
		\state_1^{[2]}
	\end{bmatrix}, \qquad 
	\state_2 
	\coloneqq 
	\begin{bmatrix} 
		\state_2^{[1]} \\ 
		\state_2^{[2]} 
	\end{bmatrix}, \qquad
	\state_3
	\coloneqq 
	\begin{bmatrix} 
		\state_3^{[1]} \\ 
		\state_3^{[2]} 
	\end{bmatrix}
\]
as well as the in- and outputs 
\[
	\controlinput 
	\coloneqq 
	\begin{bmatrix} 
		\controlinput^{[1]} \\ 
		\controlinput^{[2]} 
	\end{bmatrix}, \qquad
	\tilde \controlinput 
	\coloneqq 
	\begin{bmatrix} 
		\tilde \controlinput^{[1]} \\ 
		\tilde \controlinput^{[2]} 
	\end{bmatrix}, \qquad
	\controloutput 
	\coloneqq 
	\begin{bmatrix} 
		\controloutput^{[1]} \\ 
		\controloutput^{[2]} 
	\end{bmatrix}.
\]
We consider $\hamiltonian(\state_1,\state_2) \coloneqq \hamiltonian^{[1]}(\state_1^{[1]},\state_2^{[1]}) + \hamiltonian^{[2]}(\state_1^{[2]},\state_2^{[2]})$ as the Hamiltonian of the coupled system. Assuming the block structure 
\[
	\Jmat^{[i]}
	= 
	\begin{bmatrix} 
		\Jmat^{[i]}_{11} & \Jmat^{[i]}_{12} & \Jmat^{[i]}_{13} \\ 
		-\Jmat^{[i]\, \top}_{12} & \Jmat^{[i]}_{22} & \Jmat^{[i]}_{23} \\
		-\Jmat^{[i]\, \top}_{13} & -\Jmat^{[i]\, \top}_{23} & \Jmat^{[i]}_{33} 
	\end{bmatrix}\!, \
	\Rmat^{[i]}
	= 
	\begin{bmatrix} 
		\Rmat^{[i]}_{11} & \Rmat^{[i]}_{12} & \Rmat^{[i]}_{13} \\ 
		\Rmat^{[i]\, \top}_{12} & \Rmat^{[i]}_{22} & \Rmat^{[i]}_{23} \\
		\Rmat^{[i]\, \top}_{13} & \Rmat^{[i]\, \top}_{23} & \Rmat^{[i]}_{33} 
	\end{bmatrix}\!, \ 
	\Bmat^{[i]}
	= 
	\begin{bmatrix} 
		\Bmat^{[i]}_1 \\ 
		\Bmat^{[i]}_2 \\
		\Bmat^{[i]}_3 
	\end{bmatrix}\!,
\]
we define the matrices 
\[
	\Jmat
	\coloneqq 
	\begin{bmatrix} 
		\Jmat^{[1]}_{11} & 0 & \Jmat^{[1]}_{12} & 0 & \Jmat^{[1]}_{13} & 0\\
		0 & \Jmat^{[2]}_{11} & 0 & \Jmat^{[2]}_{12} & 0 & \Jmat^{[2]}_{13} \\  
		-\Jmat^{[1]\, \top}_{12} & 0 & \Jmat^{[1]}_{22} & 0 & \Jmat^{[1]}_{23} & 0\\
		0 & -\Jmat^{[2]\, \top}_{12} & 0 & \Jmat^{[2]}_{22} & 0 & \Jmat^{[2]}_{23} \\
		-\Jmat^{[1]\, \top}_{13} & 0 & -\Jmat^{[1]\, \top}_{23} & 0 & \Jmat^{[1]}_{33} & 0\\
		0 & -\Jmat^{[2]\, \top}_{13} & 0 & -\Jmat^{[2]\, \top}_{23} & 0 & \Jmat^{[2]}_{33} 
	\end{bmatrix}\!, \
	\Bmat
	\coloneqq
	\begin{bmatrix} 
		\Bmat^{[1]}_{1} & 0 \\
		0 & \Bmat^{[2]}_{1} \\  
		\Bmat^{[1]}_{2} & 0 \\
		0 & \Bmat^{[2]}_{2} \\
		\Bmat^{[1]}_{3} & 0 \\
		0 & \Bmat^{[2]}_{3} 
	\end{bmatrix},
\]
and $\Rmat$ correspondingly. Since $\Jmat^{[i]}$ is skew-symmetric, so are $\Jmat^{[i]}_{11}$, $\Jmat^{[i]}_{22}$ and $\Jmat^{[i]}_{33}$ for $i=1,2$. Hence, $\Jmat$ is skew-symmetric. Similarly, it can be shown that $\Rmat$ is symmetric and positive semi-definite. 
The two output equations can be combined to
\begin{align*}
	\controloutput
	= 
	\begin{bmatrix} 
		\controloutput^{[1]} \\ 
		\controloutput^{[2]} 
	\end{bmatrix}
	= 
	\Bmat\transpose
	\begin{bmatrix} 
		\ddt{\state}_1 \\ 
		\effort(\state_1,\state_2) \\
		\state_3 
	\end{bmatrix}, \qquad
	\effort(\state_1,\state_2)
	= 
	\begin{bmatrix}
		\effort^{[1]}(\state_1^{[1]},\state_2^{[1]}) \\ 
		\effort^{[2]}(\state_1^{[2]},\state_2^{[2]}) \\
	\end{bmatrix}. 
\end{align*}
With $\Emat \coloneqq \Diag(\Emat^{[1]}, \Emat^{[2]})$, the efforts satisfy 
\[
	\Emat\transpose \effort(\state_1,\state_2)
	=  
	\begin{bmatrix} 
		\Emat^{[1]\, \top} \effort^{[1]}(\state_1^{[1]},\state_2^{[1]}) \\ 
		\Emat^{[2]\, \top} \effort^{[2]}(\state_1^{[2]},\state_2^{[2]}) 
	\end{bmatrix}
	=
	\begin{bmatrix} 
		\partial_{2} \hamiltonian^{[1]} \\ 
		\partial_{2} \hamiltonian^{[2]}
	\end{bmatrix}	
	=
	\nabla_{\state_2}\hamiltonian. 
\]
Together with the interconnection equation, namely $\controlinput = (\FmatSkew - \FmatSym)\, \controloutput + \tilde \controlinput$, we get 
\begin{align*}
	\begin{bmatrix}
		\nabla_{\state_1}\hamiltonian  \\
		\Emat \ddt{\state}_2\\
		0
	\end{bmatrix}
	&=
	\big( \Jmat - \Rmat \big) 
	\begin{bmatrix} 
	\ddt{\state}_1 \\ 
	\effort(\state_1,\state_2)\\ 
	\state_3 
	\end{bmatrix}
	+ \Bmat\, (\FmatSkew - \FmatSym)\, \controloutput 
	+ \Bmat \tilde \controlinput \\ 
	&= 
	\Big( \big(\Jmat+\Bmat\FmatSkew\Bmat\transpose\big) - \big(\Rmat+\Bmat\FmatSym\Bmat\transpose\big) \Big) 
	\begin{bmatrix} 
		\ddt{\state}_1 \\ 
		\effort(\state_1,\state_2)\\ 
		\state_3 
	\end{bmatrix}
	+ \Bmat \tilde \controlinput,	
\end{align*}
which has the form~\eqref{eq:moregeneralstructure}. To see this, note that $\Bmat\FmatSkew\Bmat^T$ is skew-symmetric and $\Bmat\FmatSym\Bmat^T$ symmetric positive semi-definite. 
\end{proof}
\begin{remark}
    \label[remark]{rem:state-dependent_matrices}
    We emphasize that the proofs of \Cref{thm:energy_dissipation,th:interconnection} do not require the matrices $\Emat$, $\Jmat$, $\Rmat$, and $\Bmat$ to be constant, but extend also to the case of state-dependent coefficient matrices.
    In the state-dependent case, the skew-symmetry of $\Jmat$ and the symmetry and positive semi-definiteness of $\Rmat$ have to be satisfied pointwise, see also \cite[Rem.~2.1]{AltS25}.
\end{remark}
\subsection{Time Discretization by the Midpoint Rule}
We consider an equidistant partition of the time interval with step size~$\tau$ and $t_0 = 0$. As usual, $x^k$ denotes the approximation of some variable $x$ at time $t_k\coloneqq k\tau$. Moreover, we use the notation $x^{k+1/2} = \frac12 (x^k + x^{k+1})$ and $\controlinput^{k+1/2} = \controlinput(t^{k+1/2})$ for the input. 
\begin{theorem}[Discrete energy dissipation] 
\label{thm:discrete_energy_dissipation}
Let the Hamiltonian be of the quadratic form
\[
	\hamiltonian(\state_1, \state_2)
	= \frac12\, \langle \state_1, \Mmat_1 \state_1 \rangle + \frac12\, \langle \state_2, \Mmat_2 \state_2 \rangle
\]
with symmetric matrices $\Mmat_1\in\R^{n_1\times n_1}$ and $\Mmat_2\in\R^{n_2\times n_2}$. With the discrete energy $\hamiltonian^k \coloneqq \hamiltonian(\state_1^k,\state_2^k)$, the midpoint scheme applied to~\eqref{eq:moregeneralstructure} satisfies 
\[
	\hamiltonian^{k+1} - \hamiltonian^k 
	\le \tau\, \langle \controloutput^{k+1/2}, \controlinput^{k+1/2} \rangle.
\]
In particular, we have $\hamiltonian^{k+1} \le \hamiltonian^k$ for $\controlinput = 0$.
\end{theorem}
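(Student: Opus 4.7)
The plan is to mirror the proof of \Cref{thm:energy_dissipation} at the discrete level, exploiting the well-known property that the midpoint rule reproduces the chain rule exactly for quadratic functionals. The starting point is the polarization identity
\[
    \hamiltonian^{k+1} - \hamiltonian^k
    = \ip{\Mmat_1\state_1^{k+1/2}}{\state_1^{k+1}-\state_1^k} + \ip{\Mmat_2\state_2^{k+1/2}}{\state_2^{k+1}-\state_2^k},
\]
which follows from the symmetry of $\Mmat_1$ and $\Mmat_2$. Because $\hamiltonian$ is quadratic, one has $\nabla_{\state_1}\hamiltonian(\state_1^{k+1/2},\state_2^{k+1/2})=\Mmat_1\state_1^{k+1/2}$ and $\nabla_{\state_2}\hamiltonian(\state_1^{k+1/2},\state_2^{k+1/2})=\Mmat_2\state_2^{k+1/2}$, so the midpoint gradients are exactly the quantities that appear on the left-hand side of the discretized system.

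Next, I would write out the midpoint discretization of \eqref{eq:moregeneralstructure} as
\[
    \begin{bmatrix}
        \Mmat_1\state_1^{k+1/2}\\[2pt]
        \Emat(\state_2^{k+1}-\state_2^k)/\tau\\[2pt]
        0
    \end{bmatrix}
    = (\Jmat-\Rmat)\, w^{k+1/2} + \Bmat\controlinput^{k+1/2},
    \qquad
    w^{k+1/2} \coloneqq \begin{bmatrix}
        (\state_1^{k+1}-\state_1^k)/\tau\\[2pt]
        \effort(\state_1^{k+1/2},\state_2^{k+1/2})\\[2pt]
        \state_3^{k+1/2}
    \end{bmatrix},
\]
and use the compatibility $\Emat\transpose\effort(\state_1,\state_2)=\nabla_{\state_2}\hamiltonian(\state_1,\state_2)=\Mmat_2\state_2$ at the midpoint state to rewrite
\[
    \ip{\Mmat_2\state_2^{k+1/2}}{\state_2^{k+1}-\state_2^k}
    = \ip{\effort(\state_1^{k+1/2},\state_2^{k+1/2})}{\Emat(\state_2^{k+1}-\state_2^k)}.
\]
Dividing the polarization identity by $\tau$, both summands then assemble into a single inner product of $w^{k+1/2}$ with the left-hand side vector of the midpoint scheme.

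Substituting the scheme and invoking skew-symmetry of $\Jmat$ (so that $\ip{w^{k+1/2}}{\Jmat\, w^{k+1/2}}=0$) and positive semi-definiteness of $\Rmat$ (so that $\ip{w^{k+1/2}}{\Rmat\, w^{k+1/2}}\ge 0$) yields
\[
    \frac{\hamiltonian^{k+1}-\hamiltonian^k}{\tau}
    \le \ip{w^{k+1/2}}{\Bmat\controlinput^{k+1/2}}
    = \ip{\Bmat\transpose w^{k+1/2}}{\controlinput^{k+1/2}}
    = \ip{\controloutput^{k+1/2}}{\controlinput^{k+1/2}},
\]
where the discrete output is defined from \eqref{eq:outputEquation} by $\controloutput^{k+1/2}\coloneqq\Bmat\transpose w^{k+1/2}$. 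Multiplying by $\tau$ gives the claim, and $\controlinput=0$ immediately yields $\hamiltonian^{k+1}\le\hamiltonian^k$.

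The delicate point is the use of the compatibility $\Emat\transpose\effort=\nabla_{\state_2}\hamiltonian$ at the midpoint: since $\Emat$ is allowed to be singular it cannot be inverted, but the assumption holds pointwise and, in particular, at $(\state_1^{k+1/2},\state_2^{k+1/2})$, which is exactly what is needed to pair $\Mmat_2\state_2^{k+1/2}$ with $\Emat(\state_2^{k+1}-\state_2^k)$ through $\effort$. Beyond this, the argument is a faithful transcription of the continuous calculation with $\tddt\state_i$ replaced by $(\state_i^{k+1}-\state_i^k)/\tau$; the quadratic structure of $\hamiltonian$ is essential, since for a general nonlinear Hamiltonian the midpoint evaluation of the gradient would not reproduce the exact energy increment and the polarization identity would fail.
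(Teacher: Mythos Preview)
Your proof is correct and follows essentially the same approach as the paper: both start from the polarization identity for the quadratic Hamiltonian, use the compatibility $\Emat\transpose\effort=\nabla_{\state_2}\hamiltonian$ at the midpoint to pass from $\Mmat_2\state_2^{k+1/2}$ to the effort, and then test the discretized scheme against the vector you call $w^{k+1/2}$ to exploit the skew-symmetry of $\Jmat$ and the semi-definiteness of $\Rmat$. The only cosmetic difference is that the paper multiplies the scheme through by $\tau$ rather than dividing, and does not introduce the shorthand $w^{k+1/2}$.
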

\begin{proof}
The midpoint rule applied to~\eqref{eq:moregeneralstructure} results in the iteration
\begin{subequations}
	\label{eq:midpoint}
	\begin{align}
		\begin{bmatrix}  
			\tau\, \nabla_{\state_1}\hamiltonian^{k+1/2}\\
			\Emat\, (\state_2^{k+1} - \state^k_2) \\ 
			0 \end{bmatrix}
		&= 
		\big( \Jmat - \Rmat \big) 
		\begin{bmatrix} 
			\state^{k+1}_1 - \state^k_1 \\ 
			\tau\, \effort(\state_1^{k+1/2},\state_2^{k+1/2}) \\ 
			\tau\, \state_3^{k+1/2} 
		\end{bmatrix}
		+ 
		\tau \Bmat \controlinput^{k+1/2}, \\
		\tau \controloutput^{k+1/2}
		&= 
		\Bmat\transpose
		\begin{bmatrix} 
			\state^{k+1}_1 - \state^k_1 \\ 
			\tau\, \effort(\state_1^{k+1/2},\state_2^{k+1/2}) \\ 
			\tau\, \state_3^{k+1/2} 
		\end{bmatrix}.
	\end{align}
\end{subequations}
Since the Hamiltonian is quadratic, we have $\nabla_{\state_\ell}\hamiltonian^{k} = \Mmat_\ell \state_\ell^k$, $\ell=1,2$. This implies 
\[
	\Emat\transpose \effort(\state^{k+1/2}_1,\state^{k+1/2}_2) 
	= \nabla_{\state_2}\hamiltonian(\state^{k+1/2}_1,\state^{k+1/2}_2)
	= \nabla_{\state_2}\hamiltonian^{k+1/2}
\]
as well as
\[
	2\hamiltonian^k 
	= \langle \state_1^k, \Mmat_1 \state_1^k \rangle + \langle \state_2^k, \Mmat_2 \state_2^k \rangle
	= \langle \state^k, \nabla_{\state}\hamiltonian^{k}\rangle.
\]
Due to the symmetry of $\Mmat_1$ and $\Mmat_2$, we conclude that
\[
	2\hamiltonian^{k+1} - 2\hamiltonian^k
	= \langle \state^{k+1}, \nabla_{\state}\hamiltonian^{k+1}\rangle 
	- \langle \state^k, \nabla_{\state}\hamiltonian^{k}\rangle 
	= 2\, \big\langle \state^{k+1} - \state^k, \partial_{\state}\hamiltonian^{k+1/2} \big\rangle.
\]
Hence, 
\begin{align*}
	\tau\, \big( \hamiltonian^{k+1} - \hamiltonian^k \big) 
	&= \tau\, \big\langle \state^{k+1} - \state^k, \partial_{\state}\hamiltonian^{k+1/2} \big\rangle \\
	&= \Bigg\langle \begin{bmatrix} \state^{k+1}_1 - \state^k_1 \\ \tau\, \nabla_{\state_2}\hamiltonian^{k+1/2} \\ \tau \state_3^{k+1/2} \end{bmatrix}, \begin{bmatrix} \tau\,\nabla_{\state_1}\hamiltonian^{k+1/2} \\ \state^{k+1}_2 - \state^k_2 \\ 0 \end{bmatrix} \Bigg\rangle \\
	&= \Bigg\langle \begin{bmatrix} \state^{k+1}_1 - \state^k_1 \\ \tau\, \effort(\state^{k+1/2}_1,\state^{k+1/2}_2) \\ \tau \state_3^{k+1/2} \end{bmatrix}, \begin{bmatrix} \tau\nabla_{\state_1}\hamiltonian^{k+1/2} \\ \Emat\, (\state^{k+1}_2 - \state^k_2) \\ 0 \end{bmatrix} \Bigg\rangle.
\end{align*}
Inserting~\eqref{eq:midpoint} finally gives 
\begin{align*}
	\tau\, \big( \hamiltonian^{k+1} - \hamiltonian^k \big)
	&\le \Bigg\langle \begin{bmatrix} \state^{k+1}_1 - \state^k_1 \\ \tau\, \effort(\state^{k+1/2}_1,\state^{k+1/2}_2) \\ \tau \state_3^{k+1/2} \end{bmatrix}, 
	\tau \Bmat \controlinput^{k+1/2} \Bigg\rangle \\
	&= \tau^2\, \big\langle \controloutput^{k+1/2}, \controlinput^{k+1/2} \big\rangle.
	\qedhere
\end{align*}
\end{proof}
\begin{remark}
    Similarly as in \Cref{rem:state-dependent_matrices}, \Cref{thm:discrete_energy_dissipation} may be extended to the case where the matrices $\Emat$, $\Jmat$, $\Rmat$, and $\Bmat$ are state-dependent by using midpoint rule approximations for each of these matrix functions. In this case, all steps of the proof remain the same.
    
\end{remark}
\begin{remark}\label{eq:trapez}
For linear time-invariant systems, the midpoint rule and the trapezoidal rule coincide. Hence, also the trapezoidal rule conserves the energy dissipation for this kind of systems, cf.~the numerical experiments in Section~\ref{sec:numerics}. 
\end{remark}
\section{Conductor Models}
\label{sec:conductor}

In low-frequency numerical computations of electromagnetic fields, so-called conductor models are used to couple conducting regions, e.g., coils, to outside circuitry \cite{Bedrosian_1993aa}.  Three conductor models are commonly distinguished: stranded, solid, and foil conductors, \cite{De-Gersem_2001aa,Schops_2013aa,Paakkunainen_2024aa,Cortes-Garcia_2019al}. We consider the conductor models in the magnetoquasistatic regime of Maxwell's equations, i.e., disregarding displacement currents \cite{Jackson_1998aa}. For that, we choose the (modified) vector potential formulation using $\vec{A}\colon\Omega\times[t_0,\finaltime]\to\R^{3}$ as the main unknown, see \cite{Emson_1988aa}. Imposing homogeneous Dirichlet boundary conditions for simplicity of presentation, this leads to the so-called eddy current problem
\begin{align*}
	\nabla\times\bigl(\nu\, \nabla\times\ensuremath{\vec{A}}\bigr) 
    &= \ensuremath{\vec{J}} && \text{in}\quad \Omega,\\
    \vec{n}\times\vec{A}
    &= 0 && \text{on}\quad \partial\Omega,
\end{align*}
where $\vec{n}$ denotes the outward pointing normal vector, $\nu$ is the magnetic reluctivity, and $\vec{J}\colon\Omega\times[t_0,\finaltime]\to\R^{3}$ equals the total current density. Again, for simplicity, we restrict ourselves to a linear isotropic reluctivity $\nu\in\R_{>0}$. 
The total current density is split into
\begin{equation*}
	\ensuremath{\vec{J}}=\ensuremath{\vec{J}}_\text{c}+\ensuremath{\vec{J}}_\text{s},
\end{equation*}
where $\ensuremath{\vec{J}}_\text{c}$ is the conduction current density, accounting for the ohmic losses, and $\ensuremath{\vec{J}}_\text{s}$ the source current density. The conduction current density is given by Ohm's law $\ensuremath{\vec{J}}_\text{c} = -\sigma\partial_{t}\ensuremath{\vec{A}}${, where $-\partial_{t}\ensuremath{\vec{A}}$ is the electric field strength, and}  in which the electric conductivity may jump
\begin{align*}
	\sigma = \begin{cases}
    \sigma_\text{c} & \text{in } \Omega_\text{c}\subset\Omega\\
   0 & \text{otherwise}
    \end{cases}
\end{align*}
with $\sigma_\text{c}\in\R_{>0}$. Then, the parabolic semi-elliptic curl--curl equation can be written as
\begin{align}\label{eq:field_pde}
	\sigma \partial_{t}\ensuremath{\vec{A}} + \nabla\times\bigl(\nu\,\nabla\times\ensuremath{\vec{A}}\bigr) 
    &= \ensuremath{\vec{J}}_\text{s}& & \text{in}\quad \Omega,\\
    \vec{n}\times\vec{A}&=0 & &\text{on}\quad \partial\Omega
\end{align}
with a given (consistent) initial condition $\vec{A}(\vec{x},0)=\vec{A}_0(\vec{x})$ for $\vec{x}\in\Omega$. Note that \eqref{eq:field_pde} does not define $\vec{A}$ uniquely in non-conducting domains due to gradient fields spanning the nullspace of the curl operator \cite{Jackson_1998aa}. There are different choices to deal with the non-uniqueness and we will follow the tree-cotree approach from \cite{Albanese_1988aa}.

To discuss the conductor models giving rise to the source current density $\vec{J}_\text{s}$, we consider Figure~\ref{fig:domains}. It implicitly encodes the following assumptions from~\cite{Cortes-Garcia_2019al}.
\begin{figure}
    \centering
    \begin{tikzpicture}[inner sep = 0cm]
        \node[anchor=south west, inner sep=0] (image) at (0,0) {%
            \includegraphics[width=7cm]{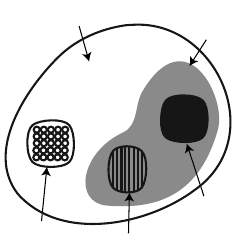}%
        };
        \node[anchor=south] at (2.3cm,6.6cm) {$\Omega$};
        \node[anchor=south] at (6.3cm,6.1cm) {$\Omega_\text{c}$};
        \node[anchor=south] at (3.7cm,-0.2cm) {$\Omega_\text{foil}$};
        \node[anchor=south] at (6.5cm,0.9cm) {$\Omega_\text{sol}$};
        \node[anchor=south] at (1.2cm,0.2cm) {$\Omega_\text{str}$};
    \end{tikzpicture}
    \caption{Computational domain with one representative of each conductor model. }
    \label{fig:domains}
\end{figure}

\begin{assumption}[Domain]\label{as:domain}
$\Omega$ is simply connected and split into two disjoint subdomains $\Omega_{\text{c}}$ and $\Omega_0$ related to the conductivity, i.e., $\overline{\Omega} = \overline{\Omega}_{\text{c}} \cup \overline{\Omega}_{0}$.
Furthermore, there is a source domain $\Omega_{\text{s}}\subset \Omega$, with $\mathrm{supp}(\ensuremath{\vec{J}}_\text{s}) = \Omega_\text{s}$, which can be divided into three disjoint subdomains $\Omega_{\text{s}} = \Omega_{\text{sol}} \cup \Omega_{\text{foil}} \cup \Omega_{\text{str}}$, corresponding to solid, foil, and stranded conductors, respectively. The source subdomains satisfy $\Omega_{\text{sol}},\Omega_{\text{foil}} \subset \Omega_{\text{c}}$ and $\Omega_{\text{str}} \subset \Omega_0$. 
Note that these domains related to conductors may appear $\nstr$, $\nsol$ and $\nfoil$ times. Finally, we assume that all domains are Lipschitz, open, and -- where applicable -- their closures do not intersect, e.g., $\overline{\Omega}_{\text{sol}} \cap \overline{\Omega}_{\text{foil}}=\emptyset$. 
\end{assumption}

A mimetic finite element (FE) discretization of \eqref{eq:field_pde} requires specific function spaces spanned by so-called edge-element basis functions which is well understood, see e.g.~\cite{Monk_2003aa}. Without introducing the corresponding function spaces or basis construction, we directly state the result of semi-discretization by the Galerkin method
\begin{equation}
	\label{eq:field_discrete}
	\begin{aligned}
		\massmatrix \ddt\discretevectorpotential(t)+\stiffnessmatrix\discretevectorpotential(t) &= \discretesourcecurrent(t),
	\end{aligned}
\end{equation}
where $\discretevectorpotential\colon [t_0,\finaltime]\to\R^{\nw}$ contains the FEM coefficients of the magnetic vector potential $\ensuremath{\vec{A}}$ and $\discretesourcecurrent$ is the discretized source current density which will be discussed in detail in the upcoming subsections. The $i$th basis function for $\ensuremath{\vec{A}}$ is denoted as $\wbasisfunction_i$, and the FE matrices $\massmatrix,\stiffnessmatrix \in\R^{\nw\times\nw}$ and $\discretesourcecurrent\in\R^{\nw}$ are defined as
\begin{align}
	[\massmatrix]_{ij} &= \integral{\Omega}{}{\sigma \wbasisfunction_j\cdot \wbasisfunction_i}V,\label{eq:massmatrix}
    \\
    [\stiffnessmatrix]_{ij} &= \integral{\Omega}{}{\nu \nabla\times\wbasisfunction_j\cdot \nabla\times\wbasisfunction_i}V,\label{eq:stiffnessmatrix}
    \\
    \label{eq:j_discrete}
    [\discretesourcecurrent]_i &=\integral{\Omega}{}{\ensuremath{\vec{J}}_\text{s}\cdot \wbasisfunction_i}V. 
\end{align}
Note that \eqref{eq:field_discrete} is a DAE due to the singularity of the conductivity matrix \cite{Nicolet_1996aa}.
\begin{lemma}[System matrices]
\label{lem:sysmatrices}
    The conductivity matrix $\massmatrix$ and the reluctivity matrix $\stiffnessmatrix$ are positive semi-definite.
\end{lemma}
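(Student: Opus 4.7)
The plan is to verify symmetry and non-negativity of the quadratic forms associated with $\massmatrix$ and $\stiffnessmatrix$, using the standard Galerkin observation that each such quadratic form on the coefficient vector is the integral of a discrete continuum field.

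First, I would note that symmetry of both matrices is immediate from the definitions \eqref{eq:massmatrix}--\eqref{eq:stiffnessmatrix}: the integrands $\sigma\, \wbasisfunction_j\cdot \wbasisfunction_i$ and $\nu\, (\nabla\times\wbasisfunction_j)\cdot(\nabla\times\wbasisfunction_i)$ are symmetric in the indices $i$ and $j$, so $\massmatrix = \massmatrix\transpose$ and $\stiffnessmatrix = \stiffnessmatrix\transpose$.

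Next, for the semi-definiteness, I would pick an arbitrary coefficient vector $\xvector \in \R^{\nw}$ and associate with it the discrete vector field $\vec{v} \coloneqq \sum_{i=1}^{\nw} x_i\, \wbasisfunction_i$. Expanding the quadratic forms and exploiting bilinearity of the integral together with linearity of the curl gives
\[
\xvector\transpose \massmatrix \xvector
= \integral{\Omega}{}{\sigma\, |\vec{v}|^2}{V}, \qquad
\xvector\transpose \stiffnessmatrix \xvector
= \integral{\Omega}{}{\nu\, |\nabla\times\vec{v}|^2}{V}.
\]
Both integrands are pointwise non-negative, since $\sigma$ takes the value $\sigma_\text{c} > 0$ on $\Omega_\text{c}$ and vanishes elsewhere, while $\nu \in \R_{>0}$ is a strictly positive constant. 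Non-negativity of the two integrals, and thus positive semi-definiteness of both matrices, follows.

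There is essentially no obstacle in this argument; the result is a direct consequence of the variational definitions and the signs of $\sigma$ and $\nu$. It is however worth emphasizing what is \emph{not} claimed: neither matrix is positive definite in general. The kernel of $\stiffnessmatrix$ contains discrete gradient fields, which is the discrete counterpart of the gauge non-uniqueness discussed after~\eqref{eq:field_pde} and motivates the tree--cotree approach, while $\massmatrix$ has a nontrivial kernel because $\sigma$ vanishes on $\Omega_0\supset\Omega_\text{str}$. Precisely this semi-definiteness is what is needed later so that the corresponding blocks can act as a dissipation matrix $\Rmat$ in the energy-based form~\eqref{eq:moregeneralstructure}.
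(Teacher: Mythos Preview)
Your proof is correct and follows exactly the approach the paper has in mind: the paper's own proof is a one-liner stating that the result ``follows immediately from the Galerkin discretization in \eqref{eq:massmatrix}--\eqref{eq:stiffnessmatrix} and the non-negativity of the material properties,'' which is precisely the argument you have spelled out in detail. Your additional remarks on the nontrivial kernels of both matrices are accurate and anticipate the paper's subsequent discussion leading to Assumption~\ref{as:Kreg}.
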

\begin{proof}
    This follows immediately from the Galerkin discretization in \eqref{eq:massmatrix}-\eqref{eq:stiffnessmatrix} and the non-negativity of the material properties.
\end{proof}
The non-uniqueness of the continuous formulation is inherited by the discrete one, i.e., there is a joint nullspace of $\massmatrix$ and $\stiffnessmatrix$. However, this can be removed by gauging, in particular the tree-cotree approach; see \cite[p. 58]{Cor20}. 
\begin{assumption}[Uniqueness]\label{as:Kreg}
We assume that a tree-cotree gauging has been applied such that the matrix pencil $\lambda\massmatrix+\stiffnessmatrix$ with $\lambda\in\R$ is regular, and \eqref{eq:field_discrete} has a unique solution given a consistent initial value $\discretevectorpotential(t_0)=\discretevectorpotential_0\in\R^{\nw}$. 
\end{assumption}
\subsection{Stranded Conductor}
\label{sec:conductor:stranded}

The stranded conductor is a simple homogenization model that avoids resolving single strands by considering a bulk material.  This is a reasonable assumption if their radius is significantly below the skin depth such that eddy current effects are negligible \cite{Bedrosian_1993aa}. For multiple conductors of this type, the source current density has only support in $\Omega_{\mathrm{s}} = \bigcup_{k=1}^{\nstr} \Omega_{\mathrm{str},k}$\, and $ \Omega_{\text{sol}} = \Omega_{\text{foil}} = \emptyset$. The model is based on distribution functions $\vec{\chi}_{\text{str},k}\colon \Omega_{\mathrm{str},k}\to\R^3$, called winding function in \cite{Schops_2013aa} where 
$$
\vec{J}_\text{s}
=\sum_k^{\nstr}\vec{\chi}_{\text{str},k}\,\current_{\text{str},k}
$$
with $\currentvector_\text{str}\colon[t_0,\finaltime]\to\R^{\nstr}$. Each winding function can be discretized as described in \eqref{eq:j_discrete} and collected in  $\Xstr\in\R^{\nw\times\nstr}$. Then, this matrix can also be used to express the voltage drops $\voltagevector_\text{str}\colon[t_0,\finaltime]\to\R^{\nstr}$ such that the resulting system reads \cite{Schops_2013aa}%
\begin{subequations}
	\label{eq:stranded}
	\begin{align}
		\massmatrix \ddt\discretevectorpotential+\stiffnessmatrix\discretevectorpotential-\Xstr\currentvector_\text{str} &=0,\\
		\Xstr\transpose\ddt\discretevectorpotential + \resistancematrix_\text{str}\currentvector_\text{str} - \voltagevector_\text{str} &= 0,
	\end{align}
\end{subequations}
where the resistance is given by
\begin{equation*}
    \resistancematrix_\text{str} = \Xstr\transpose\boldsymbol{M}_\text{str}^{+}\Xstr
\end{equation*}
with the Moore--Penrose pseudo-inverse $\boldsymbol{M}_\text{str}^{+}$ of $\boldsymbol{M}_\text{str}$, which is built analogously to \eqref{eq:massmatrix} by adding a conductivity $\sigma=\sigma_\text{str}>0$ on the stranded conductor domain $\Omega_{\text{str}}$. This accounts for losses due to direct currents, while eddy currents are still neglected.  

\begin{lemma}
\label{lem:resistanceMatrix}
    The resistance matrix $\resistancematrix_\text{str}$ is positive semi-definite.
\end{lemma}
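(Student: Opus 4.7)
The plan is to reduce the claim to the standard congruence identity
\[
    v\transpose \resistancematrix_\text{str} v = (\Xstr v)\transpose \boldsymbol{M}_\text{str}^{+} (\Xstr v),
\]
so that it suffices to show that the Moore--Penrose pseudo-inverse $\boldsymbol{M}_\text{str}^{+}$ is itself symmetric positive semi-definite. Once this is in place, symmetry and positive semi-definiteness of $\resistancematrix_\text{str}$ follow immediately.

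First I would observe that $\boldsymbol{M}_\text{str}$ is symmetric positive semi-definite by exactly the same Galerkin argument used in \Cref{lem:sysmatrices}: it arises from a bilinear form of the type \eqref{eq:massmatrix} with a non-negative piecewise-constant conductivity ($\sigma_\text{str} > 0$ on $\Omega_\text{str}$ and zero elsewhere), so $v\transpose \boldsymbol{M}_\text{str} v$ equals the $L^2$-norm squared of the corresponding FE function weighted by $\sigma_\text{str}$, which is non-negative. Symmetry is clear from the integrand.

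Next, I would invoke the standard spectral-decomposition argument to transfer these properties to the pseudo-inverse. Writing $\boldsymbol{M}_\text{str} = U \Lambda U\transpose$ with $U$ orthogonal and $\Lambda$ diagonal with non-negative entries $\lambda_i$, one has $\boldsymbol{M}_\text{str}^{+} = U \Lambda^{+} U\transpose$, where $\Lambda^{+}$ is diagonal with entries $1/\lambda_i$ on the support of the spectrum and $0$ otherwise. Hence $\boldsymbol{M}_\text{str}^{+}$ is symmetric with non-negative eigenvalues.

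Finally, for any $v\in\R^{\nstr}$ the congruence step gives
\[
    v\transpose \resistancematrix_\text{str} v = (\Xstr v)\transpose \boldsymbol{M}_\text{str}^{+} (\Xstr v) \ge 0,
\]
while $\resistancematrix_\text{str}\transpose = \Xstr\transpose (\boldsymbol{M}_\text{str}^{+})\transpose \Xstr = \resistancematrix_\text{str}$ yields symmetry. There is no real obstacle; the only subtle point that deserves an explicit mention is the positive semi-definiteness of the pseudo-inverse, which is easy to take for granted in a one-line proof.
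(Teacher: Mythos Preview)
Your proposal is correct and follows exactly the same line as the paper's proof: argue that $\boldsymbol{M}_\text{str}$ is symmetric positive semi-definite from its Galerkin construction, infer that $\boldsymbol{M}_\text{str}^{+}$ is as well, and conclude via the congruence $\resistancematrix_\text{str}=\Xstr\transpose \boldsymbol{M}_\text{str}^{+}\Xstr$. The paper compresses this into one sentence, whereas you spell out the spectral-decomposition step for the pseudo-inverse explicitly.
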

\begin{proof}
    Due to its construction and the fact that $\sigma_\text{str}>0$,  $\boldsymbol{M}_{\text{str}}$ is positive semi-definite and so is $\boldsymbol{M}_{\text{str}}^{+}$.
\end{proof}
In practice, $\resistancematrix_\text{str}$ is positive definite. This can also be proven by exploiting additional properties of the winding functions. However, we only need semi-definiteness for the following result
\begin{theorem}
\label{thm:stranded}
The stranded conductor model~\eqref{eq:stranded} fits into the structure~\eqref{eq:generalstructure}.
\end{theorem}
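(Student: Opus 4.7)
The plan is to exhibit an explicit splitting that casts \eqref{eq:stranded} into the basic form \eqref{eq:generalstructure}. I take $\state_1 \coloneqq \discretevectorpotential$ as the field degrees of freedom and $\state_3 \coloneqq \currentvector_\text{str}$ as the algebraic current variable, with no $\state_2$-block (i.e.\ $n_2 = 0$). The Hamiltonian is the discrete magnetic field energy
\[
    \hamiltonian(\state_1) \coloneqq \tfrac{1}{2}\, \discretevectorpotential\transpose \stiffnessmatrix \discretevectorpotential,
\]
so that $\nabla_{\state_1}\hamiltonian = \stiffnessmatrix\discretevectorpotential$, which is exactly the non-derivative field term already appearing on the left-hand side of \eqref{eq:stranded}. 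The port input is taken as $\controlinput \coloneqq \voltagevector_\text{str}$, and the conjugate output will then come out as $\controloutput = \currentvector_\text{str}$, matching the expected voltage-to-current port behavior.

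To fix the coefficient matrices, I would rewrite the first equation of \eqref{eq:stranded} as $\stiffnessmatrix\discretevectorpotential = -\massmatrix \ddt \discretevectorpotential + \Xstr \currentvector_\text{str}$ and the negated second as $0 = -\Xstr\transpose \ddt \discretevectorpotential - \resistancematrix_\text{str} \currentvector_\text{str} + \voltagevector_\text{str}$, then match coefficients block-by-block against \eqref{eq:generalstructure}. Imposing simultaneously the block-skew-symmetry constraint $\Jmat_{31} = -\Jmat_{13}\transpose$ and the block-symmetry $\Rmat_{31} = \Rmat_{13}\transpose$ uniquely forces $\Jmat_{13} = \Xstr$ and $\Rmat_{13} = 0$. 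This yields
\[
    \Jmat = \begin{bmatrix} 0 & \Xstr \\ -\Xstr\transpose & 0 \end{bmatrix}, \qquad
    \Rmat = \begin{bmatrix} \massmatrix & 0 \\ 0 & \resistancematrix_\text{str} \end{bmatrix}, \qquad
    \Bmat = \begin{bmatrix} 0 \\ I \end{bmatrix},
\]
and a direct substitution then verifies that \eqref{eq:generalstructure} reproduces both equations of \eqref{eq:stranded} exactly.

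It remains to check the structural requirements. $\Jmat$ is skew-symmetric by construction, while $\Rmat$ is block-diagonal with $\massmatrix = \massmatrix\transpose \ge 0$ by \Cref{lem:sysmatrices} and $\resistancematrix_\text{str} = \resistancematrix_\text{str}\transpose \ge 0$ by \Cref{lem:resistanceMatrix}, so $\Rmat = \Rmat\transpose \ge 0$. Since $n_2 = 0$, the effort/$\Emat$ compatibility condition is vacuous and we recover \eqref{eq:generalstructure} exactly.

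The algebra is essentially bookkeeping; the only conceptual point, and what I would flag as the crux, is recognizing that the curl--curl stiffness $\stiffnessmatrix$ plays the role of the Hamiltonian gradient while the conductivity matrix $\massmatrix$ must be absorbed into the dissipation block $\Rmat_{11}$ rather than placed in front of $\ddt\discretevectorpotential$ as an $\Emat$-block. This identification is only permissible because of $\massmatrix \ge 0$ (\Cref{lem:sysmatrices}), consistent with the physical role of $\massmatrix$ as the carrier of ohmic losses. No further obstacle is anticipated.
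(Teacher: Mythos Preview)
Your proposal is correct and matches the paper's own proof essentially verbatim: the same choice of $\state_1=\discretevectorpotential$, $\state_3=\currentvector_\text{str}$, quadratic Hamiltonian $\tfrac12\discretevectorpotential\transpose\stiffnessmatrix\discretevectorpotential$, input $\controlinput=\voltagevector_\text{str}$, and the same matrices $\Jmat$, $\Rmat$, $\Bmat$, with positive semi-definiteness of $\Rmat$ justified via \Cref{lem:sysmatrices} and \Cref{lem:resistanceMatrix}. The only difference is that you spell out the coefficient-matching and the rationale for placing $\massmatrix$ in $\Rmat$ more explicitly than the paper does.
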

\begin{proof}
We set $\state_1=\discretevectorpotential$, $\state_3=\currentvector_\text{str}$, $\hamiltonian(\discretevectorpotential) = \tfrac12\,\discretevectorpotential\transpose \stiffnessmatrix\discretevectorpotential$, $\controlinput = \voltagevector_\text{str}$, and 
\begin{equation*}
    \Jmat = 
    \begin{bmatrix}
        0 & \Xstr\\
        -\Xstr\transpose & 0
    \end{bmatrix}
    ,\quad \Rmat = 
    \begin{bmatrix}
        \massmatrix & 0\\
        0 & \resistancematrix_\text{str}
    \end{bmatrix}
    ,\quad \Bmat = 
    \begin{bmatrix}
        0\\
        \boldsymbol{I}
    \end{bmatrix}
    ,
\end{equation*}
where $\boldsymbol{I}$ denotes the identity and $\massmatrix$ and $\resistancematrix_\text{str}$ are positive semi-definite due to Lemma~\ref{lem:sysmatrices} and~\ref{lem:resistanceMatrix}, respectively. 
\end{proof}
Note that all terms in $\Rmat$ are indeed related to conductivity or resistivity. The resulting power balance reads
\begin{equation*}
    \ddt \hamiltonian(\discretevectorpotential)
    = -
    \begin{bmatrix}
        \ddt\discretevectorpotential\\
        \currentvector_\text{str}
    \end{bmatrix}
    \transpose
    \begin{bmatrix}
        \massmatrix & 0\\
        0 & \resistancematrix_\text{str}
    \end{bmatrix}
    \begin{bmatrix}
        \ddt\discretevectorpotential\\
        \currentvector_\text{str}
    \end{bmatrix}
    + \currentvector_\text{str}\cdot\voltagevector_\text{str}.
\end{equation*}
\subsection{Solid Conductor}
\label{sec:conductor:solid}

If eddy currents are non-negligible in a conductor, the solid conductor model is used. Let us assume multiple conductors of this type, then the source current density has support in $\Omega_{\mathrm{s}} = \bigcup_{k=1}^{\nsol} \Omega_{\mathrm{sol},k}$\, and $ \Omega_{\text{str}} = \Omega_{\text{foil}} = \emptyset$. The coupling is established by voltage distribution functions such that 
$$\vec{J}_\text{s}=\sum_{k}^{\nsol}\sigma\vec{\chi}_{\text{sol},k}\voltage_{\text{sol},k}$$
with voltages $\voltagevector_{\text{sol}}\colon[t_0,\finaltime]\to\R^{\nsol}$.
Again, each distribution function $\vec{\chi}_{\text{sol},k}\colon$ $\Omega_{\mathrm{sol},k}\to\R^3$ can be discretized separately and collected in a matrix $\Xsol\in\R^{\nw\times\nsol}$ such that the system reads
\begin{subequations}
	\label{eq:solid}
	\begin{align}
		\massmatrix \ddt\discretevectorpotential+\stiffnessmatrix\discretevectorpotential-\massmatrix\Xsol\voltagevector_{\text{sol}} &=0,\\
		-\Xsol\transpose\massmatrix\transpose\ddt\discretevectorpotential + \conductancematrix_{\text{sol}}\voltagevector_{\text{sol}}-\currentvector_{\text{sol}} &=0
	\end{align}
\end{subequations}
with $\currentvector_{\text{sol}}\colon[t_0,\finaltime]\to\R^{\nsol}$. The (direct current) conductance matrix is defined as
\begin{equation}
\label{eq:conductance_solid}
\conductancematrix_{\text{sol}} =  \Xsol\transpose\massmatrix\Xsol.   
\end{equation}
\begin{remark}
\label{lem:conductanceMatrix}
Due to the positive semi-definiteness of $\massmatrix$ shown in Lemma~\ref{lem:sysmatrices}, the conductance matrix $\conductancematrix_{\text{sol}}$ is positive semi-definite.
\end{remark}
Note that one can show again the stronger result of positive definiteness if considering the exact images and spaces of the involved matrices. However, semi-definiteness is enough to prove the following result. 
\begin{theorem}
\label{thm:solid}
    The solid conductor model~\eqref{eq:solid} fits into the structure~\eqref{eq:generalstructure}.
\end{theorem}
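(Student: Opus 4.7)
The plan is to mirror the proof of \Cref{thm:stranded}, using the natural duality between the stranded and solid conductor models: in the solid case the voltage plays the role of an internal variable, and the current takes the role of the input. Concretely, I would set $\state_1 = \discretevectorpotential$, $\state_3 = \voltagevector_\text{sol}$ (no $\state_2$), choose the same Hamiltonian $\hamiltonian(\discretevectorpotential) = \tfrac12 \discretevectorpotential\transpose \stiffnessmatrix \discretevectorpotential$ as in the stranded case, and take $\controlinput = \currentvector_\text{sol}$.

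With these identifications, the template~\eqref{eq:generalstructure} reduces (due to the absence of $\state_2$) to two block rows, which I would compare term by term with the two equations of~\eqref{eq:solid}. Reading off coefficients and enforcing skew-symmetry of $\Jmat$ and symmetry of $\Rmat$ forces $\Jmat = 0$ and leads to the ansatz
\[
    \Rmat =
    \begin{bmatrix}
        \massmatrix & -\massmatrix\Xsol \\
        -\Xsol\transpose\massmatrix & \conductancematrix_\text{sol}
    \end{bmatrix},
    \qquad
    \Bmat =
    \begin{bmatrix}
        0 \\ \boldsymbol{I}
    \end{bmatrix}.
\]
Substituting these back into~\eqref{eq:generalstructure} and using $\conductancematrix_\text{sol} = \Xsol\transpose \massmatrix \Xsol$ together with the symmetry of $\massmatrix$ immediately reproduces~\eqref{eq:solid}, so it only remains to verify the sign and symmetry properties of $\Rmat$.

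The main (and only genuine) obstacle is to show that this $\Rmat$ is symmetric positive semi-definite, as the two off-diagonal blocks do not vanish. I would handle this by exhibiting the factorization
\[
    \Rmat
    =
    \begin{bmatrix} \boldsymbol{I} \\ -\Xsol\transpose \end{bmatrix}
    \massmatrix
    \begin{bmatrix} \boldsymbol{I} & -\Xsol \end{bmatrix},
\]
which is symmetric by the symmetry of $\massmatrix$ and positive semi-definite by \Cref{lem:sysmatrices}. Symmetry of the diagonal block $\conductancematrix_\text{sol}$ and skew-symmetry of $\Jmat = 0$ are trivial, completing the identification.
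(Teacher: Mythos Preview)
Your proposal is correct and essentially identical to the paper's own proof: the same choice of $\state_1=\discretevectorpotential$, $\state_3=\voltagevector_{\text{sol}}$, $\hamiltonian(\discretevectorpotential)=\tfrac12\discretevectorpotential\transpose\stiffnessmatrix\discretevectorpotential$, $\controlinput=\currentvector_{\text{sol}}$, $\Jmat=0$, $\Bmat=[0;\,\boldsymbol{I}]$, and the same factorization $\Rmat=[\boldsymbol{I}\ \ -\Xsol]\transpose\massmatrix[\boldsymbol{I}\ \ -\Xsol]$ to establish positive semi-definiteness via \Cref{lem:sysmatrices}.
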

\begin{proof}
We define $\state_1=\discretevectorpotential$, $\state_3=\voltagevector_{\text{sol}}$, $\hamiltonian(\discretevectorpotential) = \tfrac12\,\discretevectorpotential\transpose \stiffnessmatrix\discretevectorpotential$, $\controlinput = \currentvector_{\text{sol}}$, and 
\begin{equation*}
    \Jmat = 0
    ,\quad \Rmat = 
    \begin{bmatrix}
        \massmatrix & -\massmatrix\Xsol\\
        -\Xsol\transpose\massmatrix\transpose & \conductancematrix_{\text{sol}}
    \end{bmatrix}
    ,\quad \Bmat = 
    \begin{bmatrix}
        0\\
        \boldsymbol{I}
    \end{bmatrix}
\end{equation*}
with identity $\boldsymbol{I}$.
Using \eqref{eq:conductance_solid}, we observe that $\Rmat$ may be factorized as
\begin{equation*}
    \Rmat = 
    \begin{bmatrix}
        \boldsymbol{I} & -\Xsol
    \end{bmatrix}
    \transpose\massmatrix
    \begin{bmatrix}
        \boldsymbol{I} & -\Xsol
    \end{bmatrix}
\end{equation*}
and thus $\Rmat$ inherits the positive semi-definiteness from $\massmatrix$, cf.~Lemma~\ref{lem:sysmatrices}.
\end{proof}
Note, all terms in $\Rmat$ are again related to conductivity or resistivity. The power balance reads
\begin{equation*}
    \ddt \hamiltonian(\discretevectorpotential)
    = -
    \begin{bmatrix}
        \ddt\discretevectorpotential\\
        \voltagevector_{\text{sol}}
    \end{bmatrix}
    \transpose
    \begin{bmatrix}
        \massmatrix & -\massmatrix\Xsol\\
        -\Xsol\transpose\massmatrix\transpose & \conductancematrix_{\text{sol}}
    \end{bmatrix}
    \begin{bmatrix}
        \ddt\discretevectorpotential\\
        \voltagevector_{\text{sol}}
    \end{bmatrix}
    +\currentvector_\text{sol}\cdot\voltagevector_\text{sol}.
\end{equation*}
\subsection{Foil Conductor}
\label{sec:conductor:foil}

The foil conductor is represented by a homogenization model that lies, in a sense, between the solid and stranded types. It captures eddy currents within the foils but neglects them in the perpendicular direction \cite{De-Gersem_2001aa, Dular_2002aa}. The following discussion is based on \cite{Paakkunainen_2024aa}, to which we refer for more details. Since the model already resolves several foils, we present here the simplified case of a single conductor to avoid confusion, i.e., $\Omega_{\mathrm{s}}=\Omega_{\mathrm{foil}}$. The source current density is written as
\begin{equation*}
	\ensuremath{\vec{J}}_\text{s} 
    =
    \sigma\Phi\vec{\chi}_\text{sol}, 
\end{equation*}
where the electric field is expressed in terms of a voltage function $\Phi\colon\Omega\times[t_0,\finaltime]\to\R$ and a distribution function $\vec{\chi}_\text{sol}$. By imposing the same current $\current_\text{foil}$ through every (virtual) foil, it can be expressed in the homogenized region as
\begin{equation}
	\label{eq:foil_current} 
    \integral{\Gamma(\alpha)}{}{\sigma\left(-\partial_{t}\vec{A}+ \Phi\vec{\chi}_\text{sol}\right)\cdot\vec{\chi}_\text{sol}}S
    = \frac{\current_\text{foil}}{\foilthickness},
\end{equation}
where $\foilthickness$ is the thickness of a foil and $\Gamma(\alpha)$ defines a surface perpendicular to the thickness in $\Omega_{\mathrm{foil}}$. The voltage over the foil winding $\voltage_\text{foil}$ is the sum of the voltage drops of the individual foils, which is expressed for the homogenized domain as
\begin{equation}
	\label{eq:foil_voltage}
    \voltage_\text{foil} = \frac{1}{\foilthickness} \integral{\coordinatetransform(\alphadomain,\beta,\gamma)}{}{\Phi}s, 
\end{equation}
where $\coordinatetransform(\alphadomain,\beta,\gamma)$ is the one-dimensional domain of homogenization of the foil conductor.
Finally, the FE discretization of \eqref{eq:field_discrete}, \eqref{eq:foil_current}, and \eqref{eq:foil_voltage} yields a system of equations
\begin{subequations}
	\label{eq:foil}
	\begin{align}
		\massmatrix \ddt\discretevectorpotential+\stiffnessmatrix\discretevectorpotential-\Xsigma\discretescalarpotential &=0,\\
		-\Xsigma\transpose\ddt\discretevectorpotential + \conductancematrix_\text{foil}\discretescalarpotential-\cvector\current_\text{foil} &=0,\\
		-\cvector\transpose\discretescalarpotential+\voltage_\text{foil} &=0,
	\end{align}
\end{subequations}
where $\discretescalarpotential\colon[t_0,\finaltime]\to\R^{\np}$ contains the FEM coefficients of the electric scalar potential, and the matrix $\Xsigma\in\R^{\nw\times\np}$ and the vector $\cvector\in\R^{\np}$ are defined by
\begin{align*}
	[\Xsigma]_{i\ell} &= \sum_{j=1}^{\nw}x_j\integral{\Omega}{}{\sigma \pbasisfunction_\ell\wbasisfunction_j\cdot\wbasisfunction_i}V,
    \\
	[\cvector]_k &= \frac{1}{\foilthickness}\integral{\coordinatetransform(\alphadomain,\beta,\gamma)}{}{\pbasisfunction_k}s.
\end{align*}
Here, $\pbasisfunction_k$, $k = 1, \dots, \np$, denote the FEM basis functions for the discretization of $\Phi$, which can be defined independently of the underlying mesh. 
Moreover, the vector $\xvector = \left[x_1,\ldots,x_{\nw}\right]^{\top}$ contains the coordinates of $\vec{\chi}_\text{sol}$ w.r.t.~the basis $(\wbasisfunction_i)_{i=1}^{\nw}$. The conductance matrix $\conductancematrix_\text{foil}\in\R^{\np\times\np}$ is defined as
\begin{equation*}
    \conductancematrix_\text{foil} = \Xsigma^\top \massmatrix^{+} \Xsigma
\end{equation*}
where $\massmatrix^{+}$ is the Moore--Penrose pseudo-inverse of $\massmatrix$.
Our definition of $\conductancematrix_\text{foil}$ ensures consistency with circuit theory, as is discussed in \cite{Paakkunainen_2024aa}. However, the legacy definition from the literature  also fulfills the relevant properties to prove the following Lemma. Note that in the specific case of discretizing $\Phi$ with $\np=1$ basis function $\pbasisfunction_1=1$, the foil conductor model becomes equivalent with the solid conductor model.

\begin{theorem}
\label{thm:foil}
    The foil conductor model~\eqref{eq:foil} fits into the structure~\eqref{eq:generalstructure}.
\end{theorem}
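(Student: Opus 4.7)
The plan is to extend the strategy used for the solid conductor in \Cref{thm:solid} to the foil model, treating the scalar potential $\discretescalarpotential$ as an algebraic state and the foil current $\current_\text{foil}$ as the external input. Concretely, I would set $\state_1 = \discretevectorpotential$, leave $\state_2$ empty, take $\state_3 = \discretescalarpotential$, the quadratic Hamiltonian $\hamiltonian(\discretevectorpotential) = \tfrac{1}{2}\discretevectorpotential\transpose\stiffnessmatrix\discretevectorpotential$ so that $\nabla_{\state_1}\hamiltonian = \stiffnessmatrix\discretevectorpotential$, and input $\controlinput = \current_\text{foil}$. The coefficient matrices would be chosen as
\[
\Jmat = 0,\qquad \Rmat = \begin{bmatrix} \massmatrix & -\Xsigma \\ -\Xsigma\transpose & \conductancematrix_\text{foil} \end{bmatrix},\qquad \Bmat = \begin{bmatrix} 0 \\ \cvector \end{bmatrix}.
\]

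With these assignments, checking that~\eqref{eq:foil} fits~\eqref{eq:generalstructure} reduces to matching block rows: the first block row of~\eqref{eq:generalstructure} reproduces the field equation~(a), the third (algebraic) block row reproduces~(b), and the output relation~\eqref{eq:outputEquation} yields $\controloutput = \cvector\transpose\discretescalarpotential$, which by~(c) equals $\voltage_\text{foil}$. Skew-symmetry of $\Jmat = 0$ and symmetry of $\Rmat$ are immediate, so the only nontrivial task is to establish positive semi-definiteness of $\Rmat$.

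This last step is where I expect the main obstacle to lie. In contrast to \Cref{thm:solid}, the bottom-right block of $\Rmat$ now contains the pseudo-inverse through $\conductancematrix_\text{foil} = \Xsigma\transpose\massmatrix^{+}\Xsigma$, so the clean direct factorization $[\boldsymbol{I},-\Xsol]\transpose\massmatrix[\boldsymbol{I},-\Xsol]$ used in the solid case is not available. Instead, I would invoke the pseudo-inverse identity $\massmatrix\massmatrix^{+}\massmatrix = \massmatrix$ together with the range inclusion $\mathrm{range}(\Xsigma) \subseteq \mathrm{range}(\massmatrix)$, which I expect to hold because each column of $\Xsigma$ carries the same conductivity weight $\sigma$ as the entries of $\massmatrix$ and is supported on $\Omega_\text{c}$. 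These properties imply $\massmatrix\massmatrix^{+}\Xsigma = \Xsigma$ and hence the symmetric factorization
\[
\Rmat = \begin{bmatrix} \massmatrix \\ -\Xsigma\transpose \end{bmatrix}\massmatrix^{+}\begin{bmatrix} \massmatrix & -\Xsigma \end{bmatrix},
\]
from which the positive semi-definiteness of $\massmatrix^{+}$ (inherited from that of $\massmatrix$, cf.~\Cref{lem:sysmatrices}) transfers to $\Rmat$ and closes the argument.
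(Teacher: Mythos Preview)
Your argument is correct, but it diverges from the paper's in two places worth noting.

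First, the input--output assignment is swapped. The paper takes $\controlinput=\voltage_\text{foil}$ and enlarges the algebraic state to $\state_3=\bigl[\discretescalarpotential\transpose\;\current_\text{foil}\bigr]\transpose$, so that all three equations of~\eqref{eq:foil} appear as block rows of~\eqref{eq:generalstructure}; the coupling between $\discretescalarpotential$ and $\current_\text{foil}$ then sits in a nontrivial $\Jmat$. You instead take $\controlinput=\current_\text{foil}$, keep $\state_3=\discretescalarpotential$, set $\Jmat=0$, and recover equation~(c) only through the output relation $\controloutput=\cvector\transpose\discretescalarpotential=\voltage_\text{foil}$. Your representation is leaner, but the paper's choice keeps the foil conductor in the same voltage-driven convention as the stranded model, which is exactly how it is later hooked into the MNA coupling in \Cref{thm:mainResult}.

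Second, for the positive semi-definiteness of $\Rmat$ the paper invokes a Schur complement criterion (citing \cite[Th.~1.20]{Zha05}): (i) $\massmatrix\ge0$, (ii) $C(\Xsigma)\subseteq C(\massmatrix)$, (iii) the Schur complement $\conductancematrix_\text{foil}-\Xsigma\transpose\massmatrix^{+}\Xsigma$ is positive semi-definite (here it vanishes identically). Your explicit factorization
\[
\Rmat=\begin{bmatrix}\massmatrix\\-\Xsigma\transpose\end{bmatrix}\massmatrix^{+}\begin{bmatrix}\massmatrix&-\Xsigma\end{bmatrix}
\]
is a more direct route and nicely mirrors the solid conductor proof; it relies on precisely the same range inclusion~(ii), which the paper justifies by citing \cite[App.~II]{Paakkunainen_2024aa} (equivalently, $\boldsymbol{Q}_\sigma\Xsigma=0$ for the kernel projector $\boldsymbol{Q}_\sigma$). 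Your heuristic for~(ii) is correct in spirit but should be stated as a reference or short lemma rather than an expectation.
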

\begin{proof}
We define $\state_1=\discretevectorpotential$, $\state_3=[\discretescalarpotential\transpose\;\,\current_\text{foil}]\transpose$, $\hamiltonian(\discretevectorpotential) = \tfrac12\,\discretevectorpotential\transpose \stiffnessmatrix\discretevectorpotential$, $\controlinput = \voltage_\text{foil}$, and 
\begin{equation*}
    \Jmat = 
    \begin{bmatrix}
        0 & 0 & 0\\
        0 & 0 & \cvector\\
        0 & -\cvector\transpose & 0
    \end{bmatrix}
    ,\quad 
    \Rmat = 
    \begin{bmatrix}
        \massmatrix & -\Xsigma & 0\\
        -\Xsigma\transpose & \conductancematrix_\text{foil} & 0\\
        0 & 0 & 0
    \end{bmatrix}
    ,\quad 
    \Bmat = 
    \begin{bmatrix}
        0\\
        0\\
        1
    \end{bmatrix}
    .
\end{equation*}
It remains to be shown that $\Rmat$ is positive semi-definite. We use \cite[Th.~1.20]{Zha05} which states that this is equivalent to 
\begin{itemize}
\item[(i)] the matrix $\massmatrix$ is positive semi-definite, 
\item[(ii)] the column spaces match, i.e., $C(\Xsigma)\subseteq C(\massmatrix)$,
\item[(iii)] the Schur complement $\conductancematrix_\text{foil}-\Xsigma^\top \massmatrix^{+} \Xsigma$ is positive semi-definite.
\end{itemize}
Points (i) and (iii) follow directly from the definitions together with Lemma~\ref{lem:sysmatrices}. Hence, we only need to show (ii). For this, we denote the projector onto the nullspace of $\massmatrix$ by 
$\boldsymbol{Q}_{\sigma}$. 
It is easy to show that $\boldsymbol{Q}_{\sigma}\Xsigma=0$, see \cite[App.~II]{Paakkunainen_2024aa} which gives (ii) and thus concludes the proof.
\end{proof}
The power balance reads
\begin{equation*}
    \ddt \hamiltonian(\discretevectorpotential)
    = -
    \begin{bmatrix}
        \ddt\discretevectorpotential\\
        \discretescalarpotential
    \end{bmatrix}
    \transpose
    \begin{bmatrix}
        \massmatrix & -\Xsigma\\
        -\Xsigma\transpose & \conductancematrix_\text{foil}
    \end{bmatrix}
    \begin{bmatrix}
        \ddt\discretevectorpotential\\
        \discretescalarpotential
    \end{bmatrix}
    +\current_\text{foil}\voltage_\text{foil}.
\end{equation*}
\subsection{Field--Circuit Coupling using MNA}
\label{sec:conductor:MNA}

There are various formulations to describe electric circuits, however, the arguably most successful is the MNA~\cite{Ho_1975aa}. Using the notation from \cite{Estevez-Schwarz_2000aa}, the dynamics of a linear circuit are then described by the governing equations
\begin{subequations}
    \label{eq:linearMNA}
    \begin{align}
        \Ac\capacitancematrix\Ac\transpose\ddt\vertexpotentials+\Ar\conductancematrix\Ar\transpose\vertexpotentials+\Al\jl+\Av\jv+\Ai\currentvector &= 0,\\
        \inductancematrix\ddt\jl -\Al\transpose\vertexpotentials &=0,\\
        \Av\transpose \vertexpotentials-\voltagevector &=0,
    \end{align}
\end{subequations}
where $\boldsymbol{A}_{\star}\in\R^{(n_{\phi}-1)\times b_{\star}}$ are incidence matrices, $\currentvector\colon[t_0,\finaltime]\to\R^{b_{\text{I}}}$ the source currents and $\voltagevector\colon[t_0,\finaltime]\to\R^{b_{\text{V}}}$ the source voltages. The number of nodes in the circuit is denoted as $n_{\phi}$ and $b_{\star}$ is the number of (branches containing) the examined circuit element. The system unknowns are the node potentials $\vertexpotentials\colon[t_0,\finaltime]\to\R^{n_{\phi}-1}$, currents through inductances $\jl\colon[t_0,\finaltime]\to\R^{b_{\text{L}}}$, and currents through voltage sources $\jv\colon[t_0,\finaltime]\to\R^{b_{\text{V}}}$.

The matrices of conductances $\conductancematrix\in\R^{b_{\text{R}} \times b_{\text{R}}}$, inductances $\inductancematrix\in\R^{b_{\text{L}} \times b_{\text{L}}}$ and capacitances $\capacitancematrix\in\R^{b_{\text{C}} \times b_{\text{C}}}$ are symmetric and positive definite, whereas $\Ac$ does not necessarily have full row rank \cite{Estevez-Schwarz_2000aa}. As a consequence, the matrix $\Ac\capacitancematrix\Ac\transpose$ may, in general, be singular and the problem is of differential--algebraic nature. It is known to have a (differential) index of up to two for specific circuit configurations. Nonetheless, we can show the following result.
\begin{theorem}
\label{thm:MNA}
    The MNA model~\eqref{eq:linearMNA} fits into the structure~\eqref{eq:moregeneralstructure}.
\end{theorem}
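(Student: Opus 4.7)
The plan is to write the MNA system~\eqref{eq:linearMNA} directly in the form~\eqref{eq:moregeneralstructure}, with the singular capacitance matrix $\Ac\capacitancematrix\Ac\transpose$ appearing as part of the coefficient $\Emat$ in front of $\ddt\state_2$. This is precisely the reason the generalized structure is needed here: the basic form~\eqref{eq:generalstructure} cannot accommodate a singular matrix in front of the time derivative, so neither the charge-based pH-DAE formulations (which would require inverting $\Ac\capacitancematrix\Ac\transpose$) nor the $\Emat=I$ special case apply.

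I would take $n_1 = 0$ (no gradient-type variables), $\state_2 = [\vertexpotentials\transpose, \jl\transpose]\transpose$, and $\state_3 = \jv$, together with the standard electric-plus-magnetic stored energy
\[
    \hamiltonian(\state_2) = \tfrac{1}{2}\,\vertexpotentials\transpose \Ac\capacitancematrix\Ac\transpose \vertexpotentials + \tfrac{1}{2}\,\jl\transpose \inductancematrix \jl,
\]
the block-diagonal coefficient $\Emat = \Diag(\Ac\capacitancematrix\Ac\transpose, \inductancematrix)$, the input $\controlinput = [\currentvector\transpose, \voltagevector\transpose]\transpose$, and the effort $\effort(\state_2) = \state_2$. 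Symmetry of $\Emat$ makes the consistency condition $\Emat\transpose\effort = \nabla_{\state_2}\hamiltonian$ immediate. The matrix $\Emat$ is only positive semi-definite and in general singular in its capacitive block, while the inductive block is positive definite.

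Next I would read off the remaining data directly from~\eqref{eq:linearMNA}. The resistive Laplacian $\Ar\conductancematrix\Ar\transpose$ is symmetric and enters $\Rmat$ as its $(\vertexpotentials,\vertexpotentials)$-block; all other blocks of $\Rmat$ are zero. The inductor and voltage-source incidence matrices enter $\Jmat$ in skew-paired off-diagonal positions: $-\Al$ paired with $\Al\transpose$ (between $\vertexpotentials$ and $\jl$ in the $\state_2$-block), and $-\Av$ paired with $\Av\transpose$ (between $\vertexpotentials$ in $\state_2$ and the algebraic block $\jv=\state_3$). Skew-symmetry of $\Jmat$ is then built in by construction. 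The port matrix $\Bmat$ is chosen so that $\Bmat \controlinput$ produces the term $-\Ai \currentvector$ in the $\vertexpotentials$-row and $-\voltagevector$ in the $\jv$-row, which makes the purely algebraic third block of~\eqref{eq:moregeneralstructure} collapse to the voltage-source equation $\Av\transpose \vertexpotentials - \voltagevector = 0$.

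It then remains to verify the two structural hypotheses. Skew-symmetry of $\Jmat$ follows from the pairing above. Symmetry of $\Rmat$ is clear from its definition, and positive semi-definiteness reduces to $\Ar\conductancematrix\Ar\transpose \ge 0$, which holds because $\conductancematrix$ is symmetric positive definite. The main step is conceptual rather than computational: recognizing that $\jv$ must be placed in $\state_3$ (so that the skew-coupling with $\vertexpotentials$ automatically yields the source-voltage constraint in the ``$0$''-row of~\eqref{eq:moregeneralstructure}), and that the capacitor potentials must stay in $\state_2$ with a singular $\Emat$ rather than being eliminated in favor of capacitor charges. This is exactly the configuration the generalization from~\eqref{eq:generalstructure} to~\eqref{eq:moregeneralstructure} was introduced to cover.
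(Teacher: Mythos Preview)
Your proposal is correct and matches the paper's own proof essentially one-to-one: the same choice $n_1=0$, $\state_2=[\vertexpotentials\transpose,\jl\transpose]\transpose$, $\state_3=\jv$, $\effort(\state_2)=\state_2$, $\Emat=\Diag(\Ac\capacitancematrix\Ac\transpose,\inductancematrix)$, and the same Hamiltonian, $\Jmat$, $\Rmat$, and $\Bmat$. Your verification of the structural hypotheses (skew-symmetry of $\Jmat$, semi-definiteness of $\Rmat$ via positive definiteness of $\conductancematrix$, and the consistency $\Emat\transpose\effort=\nabla_{\state_2}\hamiltonian$) is exactly what the paper does.
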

\begin{proof}
We set 
\begin{align*}
    \state_2 &= 
    \begin{bmatrix}
        \vertexpotentials\\
        \jl
    \end{bmatrix}
    =\effort(\state_2),\qquad 
    \state_3 
    = \jv,\qquad 
    \controlinput = 
    \begin{bmatrix}
        \currentvector\\
        \voltagevector
    \end{bmatrix}
\end{align*}
and consider the energy $\hamiltonian(\state_2) = \tfrac12\, \vertexpotentials\transpose \Ac\capacitancematrix\Ac\transpose\vertexpotentials + \tfrac12\,\jl\transpose\inductancematrix\jl$. Then the choices
\begin{align*}    
    \Emat &= 
    \begin{bmatrix}
        \Ac\capacitancematrix\Ac\transpose & 0\\
        0 & \inductancematrix
    \end{bmatrix}
    ,\qquad 
    \Jmat = 
    \begin{bmatrix}
        0 & -\Al & -\Av\\
        \Al\transpose & 0 & 0\\
        \Av\transpose & 0 & 0
    \end{bmatrix}
    ,\\
    \Rmat 
    &= 
    \begin{bmatrix}
        \Ar\conductancematrix\Ar\transpose & 0 & 0\\
        0 & 0 & 0\\
        0 & 0 & 0
    \end{bmatrix}
    ,\qquad 
    \Bmat = -
    \begin{bmatrix}
        \Ai & 0\\
        0 & 0\\
        0 & I
    \end{bmatrix}
\end{align*}
satisfy 
\[
     \Emat\transpose\effort(\state_2) 
     = 
     \begin{bmatrix}
        \Ac\capacitancematrix\Ac\transpose & 0\\
        0 & \inductancematrix
    \end{bmatrix} \state_2
    = \begin{bmatrix}
        \Ac\capacitancematrix\Ac\transpose \vertexpotentials \\
        \inductancematrix \jl
    \end{bmatrix}
    = \nabla_{\state_2}\hamiltonian(\state_2)
\]
with $\Rmat$ being again positive semi-definite. 
\end{proof}
We finish this section by exploiting the structure-preserving interconnection property from Theorem~\ref{th:interconnection}.
\begin{theorem}
\label{thm:mainResult}
    The field--circuit coupled model consisting of interconnections 
    of field models \eqref{eq:stranded}, \eqref{eq:solid}, and \eqref{eq:foil} with
    circuit equations \eqref{eq:linearMNA} fits into the structure~\eqref{eq:moregeneralstructure}.
\end{theorem}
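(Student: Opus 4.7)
The plan is to combine \Cref{thm:stranded,thm:solid,thm:foil}, which embed each individual conductor model into the structure \eqref{eq:generalstructure}, with \Cref{thm:MNA}, which embeds the MNA subsystem into the more general form \eqref{eq:moregeneralstructure}, and then to iterate \Cref{th:interconnection} once for every coupled branch. Since \eqref{eq:generalstructure} is exactly the special case $\Emat=\boldsymbol{I}$ of \eqref{eq:moregeneralstructure}, every subsystem already has the target form, so the only remaining task is to exhibit the physical field--circuit coupling as a power-preserving interconnection that meets the hypotheses of \Cref{th:interconnection}.

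First, I would make the input--output signatures of each subsystem explicit. From the proof of \Cref{thm:stranded} the stranded model takes the terminal voltage $\voltagevector_{\mathrm{str}}$ as input and returns the winding current $\currentvector_{\mathrm{str}}$ as output; from \Cref{thm:foil} the foil model is analogous with input $\voltage_\text{foil}$ and output $\current_\text{foil}$; by \Cref{thm:solid} the solid model is dual, with input $\currentvector_{\text{sol}}$ and output $\voltagevector_{\text{sol}}$; and from \Cref{thm:MNA} the circuit subsystem has input $(\currentvector,\voltagevector)$ and, evaluating $\Bmat\transpose$ with the choices made there, output $-(\Ai\transpose\vertexpotentials,\jv)$. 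Thus every interface quantity on both sides is either a branch current or a branch voltage, and at each coupled branch they meet in the natural current/voltage pairing.

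Second, I would partition the source incidence matrices of the circuit as $\Ai=[\Ai^{\mathrm{ext}},\Ai^{\mathrm{str}},\Ai^{\mathrm{foil}}]$ and $\Av=[\Av^{\mathrm{ext}},\Av^{\mathrm{sol}}]$, reserving the new columns for the branches to which the coupled conductors attach and keeping the superscript \textrm{ext} columns for the truly independent sources. The coupling then prescribes that the MNA current- and voltage-source blocks associated with coupled branches equal the outputs of the respective conductor subsystems, and, conversely, that the associated entries of the MNA output are fed back as the conductor inputs. In the language of \Cref{th:interconnection} this produces a relation $\controlinput = (\FmatSkew-\FmatSym)\controloutput + \tilde{\controlinput}$ in which $\tilde{\controlinput}$ collects only the truly external excitations, $\FmatSym = 0$ because the interconnection itself neither stores nor dissipates energy, and $\FmatSkew$ is built from $\pm\boldsymbol{I}$ blocks placed in conjugate off-diagonal positions, so the skew-symmetry $\FmatSkew=-\FmatSkew\transpose$ can be read off directly. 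One application of \Cref{th:interconnection} per coupled conductor then yields a single system of the form \eqref{eq:moregeneralstructure}, which is exactly the claim.

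I expect the main obstacle to be purely bookkeeping of sign conventions. Kirchhoff's current law in \eqref{eq:linearMNA} carries $\Ai\currentvector$ with a plus sign, $\Bmat$ in the proof of \Cref{thm:MNA} has an overall minus, and the currents and voltages delivered by the conductor subsystems inherit orientations from their winding functions $\vec{\chi}$. I would therefore choose the orientation of each coupled branch so that the sign produced on the field side and the sign propagated through $\Bmat$ on the MNA side cancel. Once this choice is made, the conjugate off-diagonal blocks of $\FmatSkew$ automatically carry matched $\pm\boldsymbol{I}$ entries, the hypotheses of \Cref{th:interconnection} hold verbatim, and no further structural adjustment is required.
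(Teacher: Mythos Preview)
Your proposal is correct and follows essentially the same route as the paper: partition $\Ai$ and $\Av$ (and correspondingly $\jv$) according to conductor type and external sources, read off the input--output pairs of each subsystem exactly as you did, and assemble a purely skew-symmetric interconnection matrix $\FmatSkew$ built from $\pm\boldsymbol{I}$ blocks with $\FmatSym=0$ and $\tilde{\controlinput}$ carrying only the external sources. The only cosmetic difference is that the paper writes out one large $\FmatSkew$ coupling all four subsystems simultaneously rather than iterating \Cref{th:interconnection} pairwise, and it fixes the signs by inspection of that explicit matrix rather than by your orientation argument; both amount to the same thing.
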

\begin{proof}
For the coupling of the MNA equations and the conductor models, we define the currents and voltages in the MNA equations \eqref{eq:linearMNA} as
\begin{equation}
    \label{eq:coupling_MNA}
    \currentvector^\top=
    \begin{bmatrix}
        \currentvector_\text{str}^\top & \current_\text{foil} & \currentvector_\text{src}^\top
    \end{bmatrix}
    ,\qquad \voltagevector^\top=
    \begin{bmatrix}
        \voltagevector_\text{sol}\transpose & \voltagevector_\text{src}^\top
    \end{bmatrix}
    ,
\end{equation}
where $\currentvector_\text{src}$ and $\voltagevector_\text{src}$ may represent external time-dependent current or voltage sources.
For the inputs of the conductor models, we first introduce the splittings
\begin{equation*}
    \Ai = 
    \begin{bmatrix}
        \Astr & \Afoil & \Asrc
    \end{bmatrix}
    ,\qquad \jv\transpose = 
    \begin{bmatrix}
        \jsol\transpose & \jsrc\transpose
    \end{bmatrix}
\end{equation*}
according to the splittings of $\currentvector$ and $\voltagevector$, respectively, and set
\begin{equation}
    \label{eq:coupling_conductors}
    \currentvector_\text{sol} 
    = \jsol,\qquad 
    \voltagevector_\text{str} 
    = \Astr\transpose\vertexpotentials,\qquad 
    \voltage_{\text{foil}} 
    = \Afoil\transpose\vertexpotentials.
\end{equation}
In total, equations~\eqref{eq:coupling_MNA} and \eqref{eq:coupling_conductors} may be summarized as
\begin{equation*}
    \underbrace{
    \begin{bmatrix}
        \currentvector\\
        \voltagevector\\
        \voltagevector_\text{str}\\
        \currentvector_\text{sol}\\
        \voltage_{\text{foil}}
    \end{bmatrix}
    }_{=\controlinput}=\underbrace{
    \begin{bmatrix}
        0 & 0 & 0 & 0 & 0 & \boldsymbol{I} & 0 & 0\\
        0 & 0 & 0 & 0 & 0 & 0 & 0 & 1\\
        0 & 0 & 0 & 0 & 0 & 0 & 0 & 0\\[0.1cm]
        0 & 0 & 0 & 0 & 0 & 0 & \boldsymbol{I} & 0\\
        0 & 0 & 0 & 0 & 0 & 0 & 0 & 0\\[0.1cm]
        -\boldsymbol{I} & 0 & 0 & 0 & 0 & 0 & 0 & 0\\
        0 & 0 & 0 & -\boldsymbol{I} & 0 & 0 & 0 & 0\\
        0 & -1 & 0 & 0 & 0 & 0 & 0 & 0
    \end{bmatrix}
    }_{=\FmatSkew}\underbrace{
    \begin{bmatrix}
        -\Astr\transpose\vertexpotentials\\
        -\Afoil\transpose\vertexpotentials\\
        -\Asrc\transpose\vertexpotentials\\
        -\jsol\\
        -\jsrc\\
        \currentvector_\text{str}\\
        \voltagevector_\text{sol}\\
        \current_{\text{foil}}
    \end{bmatrix}
    }_{=\controloutput}+\underbrace{
    \begin{bmatrix}
        0\\
        0\\
        \currentvector_\text{src}\\
        0\\
        \voltagevector_\text{src}\\
        0\\
        0\\
        0
    \end{bmatrix}
    }_{=\tilde \controlinput}.
\end{equation*}
Since $\FmatSkew$ is skew-symmetric, we can apply Theorem~\ref{th:interconnection}. For this, we use the fact that the single components all fit in the structure~\eqref{eq:moregeneralstructure}--\eqref{eq:outputEquation}, cf.~Theorems~\ref{thm:stranded}, \ref{thm:solid}, \ref{thm:foil} for the conductor models and Theorem~\ref{thm:MNA} for the MNA model.
\end{proof}
\section{Numerical Examples}
\label{sec:numerics}

This section provides numerical examples illustrating the concepts discussed in the previous sections. Section~\ref{sec:numerics_oscillator} discusses an oscillator circuit containing a lumped capacitor and an inductor which embeds a field model. Section~\ref{sec:numerics_index2} contains an example of an index 2 DAE, and in Section~\ref{sec:numerics_transformer}, a three-phase transformer model is examined. The FE models were implemented with the open source software GetDP \cite{Dular_1998ac} and Gmsh \cite{Geuzaine_2009ab}. The model implementations are made available in \cite{AltZ25}.

\subsection{Oscillator Circuit}
\label{sec:numerics_oscillator}

First, we examine the toy example of an oscillator circuit for verification purposes. Figure~\ref{fig:oscillator_example} shows the circuit studied and the corresponding domains of the field problem. The inductor is modeled with both the stranded~\eqref{eq:stranded} and the solid conductor models~\eqref{eq:solid} in a 2D axisymmetric setting (which requires no gauging). This field--circuit coupled model with a stranded or solid conductor fits into the structure \eqref{eq:moregeneralstructure}, as stated in Theorem~\ref{thm:mainResult}. Table~\ref{tab:parameters} lists the parameters of the circuit and the field model. As initial conditions at $t_0=\SI{0}{\second}$, the voltage over the capacitor is set to $\phi(t_0)=v_0=\SI{1}{\volt}$ and there is no current, i.e., $i(t_0)=i_0=\SI{0}{\ampere}$, in the circuit. The simulation is run until $t_f=50\si{\micro\second}$.

\begin{figure}
	\captionsetup{justification=centering}
	\centering
	\subfloat[]{\def\circheight{2.75}
\def\circwidth{1.25}

\begin{circuitikz}
    \useasboundingbox (-2.2cm,-1.5cm) rectangle (2.5cm,1.5cm); 
    \draw[semithick] (0,0) to [short] (-\circwidth,0) to [C=$C$] (-\circwidth,\circheight) to [short](\circwidth,\circheight) to [R, xgeneric] (\circwidth,0) to [short] (0,0);
    
\end{circuitikz}\label{fig:oscillator_circuit}}
    \subfloat[]{\includegraphics{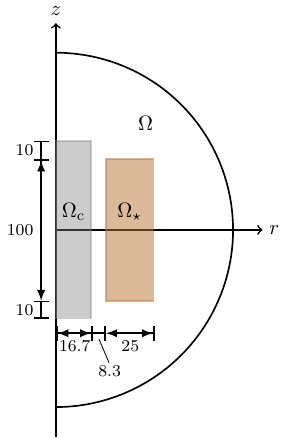}\label{fig:oscillator_fe_geometry}}
    \caption{\protect\subref{fig:oscillator_circuit} Schematic of the oscillator circuit. \protect\subref{fig:oscillator_fe_geometry} 2D axisymmetric domain of the inductor, where $\Omega_{\star}$ denotes either the domain of a solid $\Omega_{\mathrm{sol}}$ or a stranded conductor $\Omega_{\mathrm{str}}$. The dimensions are given in mm.
    }
	\label{fig:oscillator_example}
\end{figure}

\begin{table}
    \caption{Parameters of the oscillator example. \label{tab:parameters}}
    \centering
    \renewcommand{\arraystretch}{1.1}
    \begin{tabular}{c|c}
    \hline 
    Quantity & Value \\
    \hline
    capacitance & \SI{100}{\micro\farad} \\
    core relative permeability & \SI{100}{} \\ 
    conductor relative permeability & \SI{1}{} \\ 
    solid conductor conductivity & \SI{58}{\mega\siemens\per\meter} \\
	time step & \SI{0.1}{\micro\second} \\
    \end{tabular}
\end{table}

When there is no resistance in the stranded conductor model and the core is assumed to be nonconducting, the energy in the oscillator circuit is preserved. An energy conserving time integration scheme is required to ensure this property and we use the trapezoidal rule, see Remark~\ref{eq:trapez}. We mainly examine in detail the trapezoidal rule and the implicit Euler method for time integration, as  they are widely used in practice in engineering applications, but also briefly consider higher-order methods. In the solid conductor model, the nonzero conductivity of the conductor always causes energy dissipation. Figure~\ref{fig:oscillator_noncond_core} shows the oscillation of the energy stored in the capacitor and the inductor when the iron core is modeled as nonconducting. It is seen how the trapezoidal rule is able to conserve the energy in contrast to the implicit Euler method. The energy oscillates with different frequencies for the stranded and solid conductors because the different field models correspond to different lumped inductance values.

\begin{figure}
    \captionsetup{justification=centering}
    \centering
    \subfloat[\footnotesize Stranded conductor Euler]{\begin{tikzpicture}
      \useasboundingbox (-1.1cm,-0.9cm) rectangle (6.1cm,5.7cm);
    \begin{axis}[
        xlabel={Time (\si{\micro\second})},
        ylabel={Energy (\si{\micro\joule})},
        grid,
        width=6.7cm,
        height=5.5cm,
        ymin=-1,
        ymax=53,
        xmin=0,
        xmax=51,
        legend style={at={(1.2,1.38)}, anchor=north,
        font=\footnotesize},
        legend columns=4,
        y label style={xshift=0cm, yshift=-0.4cm, font=\footnotesize},
        x label style={yshift=0.1cm, font=\footnotesize},
        ytick={0,10,20,30,40,50},
        tick label style={font=\footnotesize},
    ]
    
    \addplot[line width=1pt, color=black] table[x expr={1e6*\thisrowno{0}}, y expr={1e6*\thisrowno{1}}] {tikz/data/Etot_osc_str_noncond_core_euler.txt}; \addlegendentry{$\hamiltonian$\phantom{m}}
    \addplot[line width=1pt, color=TUDa-9a] table[x expr={1e6*\thisrowno{0}}, y expr={1e6*\thisrowno{1}}] {tikz/data/EL_osc_str_noncond_core_euler.txt}; \addlegendentry{Magnetic energy\phantom{m}}
    \addplot[line width=1pt, color=TUDa-2a] table[x expr={1e6*\thisrowno{0}}, y expr={1e6*\thisrowno{1}}] {tikz/data/EC_osc_str_noncond_core_euler.txt}; \addlegendentry{Capacitor energy\phantom{m}}
    \addlegendimage{line width=1pt, color=TUDa-7a}
    \addlegendentry{Dissipated energy}

    \end{axis}
\end{tikzpicture}\label{fig:oscillator_str_noncond_core_euler}}
    \subfloat[\footnotesize Stranded conductor trapezoidal]{\begin{tikzpicture}
        \useasboundingbox (-1.1cm,-0.9cm) rectangle (6.1cm,5.7cm);
    \begin{axis}[
        xlabel={Time (\si{\micro\second})},
        ylabel={Energy (\si{\micro\joule})},
        grid,
        width=6.7cm,
        height=5.5cm,
        ymin=-1,
        ymax=53,
        xmin=0,
        xmax=51,
        legend style={at={(0.5,1.8)}, anchor=north, font=\footnotesize},
        y label style={xshift=0cm, yshift=-0.4cm, font=\footnotesize},
        x label style={yshift=0.1cm, font=\footnotesize},
        ytick={0,10,20,30,40,50},
        tick label style={font=\footnotesize},
    ]
    
    \addplot[line width=1pt, color=black] table[x expr={1e6*\thisrowno{0}}, y expr={1e6*\thisrowno{1}}] {tikz/data/Etot_osc_str_noncond_core_trap.txt}; 
    \addplot[line width=1pt, color=TUDa-9a] table[x expr={1e6*\thisrowno{0}}, y expr={1e6*\thisrowno{1}}] {tikz/data/EL_osc_str_noncond_core_trap.txt};
    \addplot[line width=1pt, color=TUDa-2a] table[x expr={1e6*\thisrowno{0}}, y expr={1e6*\thisrowno{1}}] {tikz/data/EC_osc_str_noncond_core_trap.txt};

    \end{axis}
\end{tikzpicture}\label{fig:oscillator_str_noncond_core_trap}}
    \\
    \vspace{-3.5em}
    \subfloat[\footnotesize Solid conductor Euler]{\begin{tikzpicture}
        \useasboundingbox (-1.1cm,-0.9cm) rectangle (6.1cm,5.7cm);
    \begin{axis}[
        xlabel={Time (\si{\micro\second})},
        ylabel={Energy (\si{\micro\joule})},
        grid,
        width=6.7cm,
        height=5.5cm,
        ymin=-1,
        ymax=53,
        xmin=0,
        xmax=51,
        legend style={at={(0.5,1.8)}, anchor=north, font=\footnotesize},
        y label style={xshift=0cm, yshift=-0.4cm, font=\footnotesize},
        x label style={yshift=0.1cm, font=\footnotesize},
        ytick={0,10,20,30,40,50},
        tick label style={font=\footnotesize},
    ]
    
    \addplot[line width=1pt, color=black] table[x expr={1e6*\thisrowno{0}}, y expr={1e6*\thisrowno{1}}] {tikz/data/Etot_osc_sol_noncond_core_euler.txt}; 
    \addplot[line width=1pt, color=TUDa-9a] table[x expr={1e6*\thisrowno{0}}, y expr={1e6*\thisrowno{1}}] {tikz/data/EL_osc_sol_noncond_core_euler.txt};
    \addplot[line width=1pt, color=TUDa-2a] table[x expr={1e6*\thisrowno{0}}, y expr={1e6*\thisrowno{1}}] {tikz/data/EC_osc_sol_noncond_core_euler.txt};
    \addplot[line width=1pt, color=TUDa-7a] table[x expr={1e6*\thisrowno{0}}, y expr={1e6*\thisrowno{1}}] {tikz/data/Ediss_osc_sol_noncond_core_euler.txt};

    \end{axis}
\end{tikzpicture}\label{fig:oscillator_sol_noncond_core_euler}}
    \subfloat[\footnotesize Solid conductor trapezoidal]{\begin{tikzpicture}
        \useasboundingbox (-1.1cm,-0.9cm) rectangle (6.1cm,5.7cm);
    \begin{axis}[
        xlabel={Time (\si{\micro\second})},
        ylabel={Energy (\si{\micro\joule})},
        grid,
        width=6.7cm,
        height=5.5cm,
        ymin=-1,
        ymax=53,
        xmin=0,
        xmax=51,
        legend style={at={(0.5,1.8)}, anchor=north, font=\footnotesize},
        y label style={xshift=0cm, yshift=-0.4cm, font=\footnotesize},
        x label style={yshift=0.1cm, font=\footnotesize},
        ytick={0,10,20,30,40,50},
        tick label style={font=\footnotesize},
    ]
    
    \addplot[line width=1pt, color=black] table[x expr={1e6*\thisrowno{0}}, y expr={1e6*\thisrowno{1}}] {tikz/data/Etot_osc_sol_noncond_core_trap.txt}; 
    \addplot[line width=1pt, color=TUDa-9a] table[x expr={1e6*\thisrowno{0}}, y expr={1e6*\thisrowno{1}}] {tikz/data/EL_osc_sol_noncond_core_trap.txt};
    \addplot[line width=1pt, color=TUDa-2a] table[x expr={1e6*\thisrowno{0}}, y expr={1e6*\thisrowno{1}}] {tikz/data/EC_osc_sol_noncond_core_trap.txt};
    \addplot[line width=1pt, color=TUDa-7a] table[x expr={1e6*\thisrowno{0}}, y expr={1e6*\thisrowno{1}}] {tikz/data/Ediss_osc_sol_noncond_core_trap.txt};

    \end{axis}
\end{tikzpicture}\label{fig:oscillator_sol_noncond_core_trap}}
    \caption{
        The energy of the oscillator with a nonconducting core when
        \protect\subref{fig:oscillator_str_noncond_core_euler} stranded conductor with the implicit Euler method, \protect\subref{fig:oscillator_str_noncond_core_trap} stranded conductor with the trapezoidal rule,  \protect\subref{fig:oscillator_sol_noncond_core_euler} solid conductor with the implicit Euler method, and \protect\subref{fig:oscillator_sol_noncond_core_trap} solid conductor with the trapezoidal rule are used.
        The Hamiltonian contains the contributions from the magnetic and the capacitor energies.}
    \label{fig:oscillator_noncond_core}
\end{figure}

When the core is modeled to be conductive, losses caused by the induced eddy currents in the core are added to the system. A conductivity of \SI{100}{\siemens\per\meter} is assumed in the core region. Figure~\ref{fig:oscillator_cond_core} shows the oscillation of energies in this slightly modified setting. Now, a damping behavior is observed with all the examined simulation cases. However, the additional numerical damping of the implicit Euler method causes a faster damping of the oscillation. When the total dissipated energy is taken into account, it is seen that the energy is still preserved with the trapezoidal rule whereas the implicit Euler method is not able to conserve the energy.

\begin{figure}
    \centering
    \subfloat[\footnotesize Stranded conductor Euler]{\begin{tikzpicture}
    \useasboundingbox (-1.1cm,-0.9cm) rectangle (6.1cm,6cm);
    \begin{axis}[
        xlabel={Time (\si{\micro\second})},
        ylabel={Energy (\si{\micro\joule})},
        grid,
        width=6.7cm,
        height=5.5cm,
        ymin=-1,
        ymax=53,
        xmin=0,
        xmax=51,
        legend style={at={(1.2,1.38)}, anchor=north, font=\footnotesize},
        legend columns=4,
        y label style={xshift=0cm, yshift=-0.4cm, font=\footnotesize},
        x label style={yshift=0.1cm, font=\footnotesize},
        ytick={0,10,20,30,40,50},
        tick label style={font=\footnotesize},
    ]
    
    \addplot[line width=1pt, color=black] table[x expr={1e6*\thisrowno{0}}, y expr={1e6*\thisrowno{1}}] {tikz/data/Etot_osc_str_cond_core_euler.txt}; \addlegendentry{$\hamiltonian$\phantom{m}}
    \addplot[line width=1pt, color=TUDa-9a] table[x expr={1e6*\thisrowno{0}}, y expr={1e6*\thisrowno{1}}] {tikz/data/EL_osc_str_cond_core_euler.txt}; \addlegendentry{Magnetic energy\phantom{m}}
    \addplot[line width=1pt, color=TUDa-2a] table[x expr={1e6*\thisrowno{0}}, y expr={1e6*\thisrowno{1}}] {tikz/data/EC_osc_str_cond_core_euler.txt}; \addlegendentry{Capacitor energy\phantom{m}}
     \addplot[line width=1pt, color=TUDa-7a] table[x expr={1e6*\thisrowno{0}}, y expr={1e6*\thisrowno{1}}] {tikz/data/Ediss_osc_str_cond_core_euler.txt}; \addlegendentry{Dissipated energy}

    \end{axis}
\end{tikzpicture}\label{fig:oscillator_str_cond_core_euler}}
    \subfloat[\footnotesize Stranded conductor trapezoidal]{\begin{tikzpicture}
       \useasboundingbox (-1.1cm,-0.9cm) rectangle (6.1cm,6cm);
    \begin{axis}[
        xlabel={Time (\si{\micro\second})},
        ylabel={Energy (\si{\micro\joule})},
        grid,
        width=6.7cm,
        height=5.5cm,
        ymin=-1,
        ymax=53,
        xmin=0,
        xmax=51,
        legend style={at={(0.5,1.8)}, anchor=north, font=\footnotesize},
        y label style={xshift=0cm, yshift=-0.4cm, font=\footnotesize},
        x label style={yshift=0.1cm, font=\footnotesize},
        ytick={0,10,20,30,40,50},
        tick label style={font=\footnotesize},
    ]
    
    \addplot[line width=1pt, color=black] table[x expr={1e6*\thisrowno{0}}, y expr={1e6*\thisrowno{1}}] {tikz/data/Etot_osc_str_cond_core_trap.txt}; 
    \addplot[line width=1pt, color=TUDa-9a] table[x expr={1e6*\thisrowno{0}}, y expr={1e6*\thisrowno{1}}] {tikz/data/EL_osc_str_cond_core_trap.txt};
    \addplot[line width=1pt, color=TUDa-2a] table[x expr={1e6*\thisrowno{0}}, y expr={1e6*\thisrowno{1}}] {tikz/data/EC_osc_str_cond_core_trap.txt};
    \addplot[line width=1pt, color=TUDa-7a] table[x expr={1e6*\thisrowno{0}}, y expr={1e6*\thisrowno{1}}] {tikz/data/Ediss_osc_str_cond_core_trap.txt};

    \end{axis}
\end{tikzpicture}\label{fig:oscillator_str_cond_core_trap}}
    \\
    \vspace{-3.5em}
    \subfloat[\footnotesize Solid conductor Euler]{\begin{tikzpicture}
       \useasboundingbox (-1.1cm,-0.9cm) rectangle (6.1cm,5.7cm);
    \begin{axis}[
        xlabel={Time (\si{\micro\second})},
        ylabel={Energy (\si{\micro\joule})},
        grid,
        width=6.7cm,
        height=5.5cm,
        ymin=-1,
        ymax=53,
        xmin=0,
        xmax=51,
        legend style={at={(0.5,1.8)}, anchor=north, font=\footnotesize},
        y label style={xshift=0cm, yshift=-0.4cm, font=\footnotesize},
        x label style={yshift=0.1cm, font=\footnotesize},
        ytick={0,10,20,30,40,50},
        tick label style={font=\footnotesize},
    ]
    
    \addplot[line width=1pt, color=black] table[x expr={1e6*\thisrowno{0}}, y expr={1e6*\thisrowno{1}}] {tikz/data/Etot_osc_sol_cond_core_euler.txt}; 
    \addplot[line width=1pt, color=TUDa-9a] table[x expr={1e6*\thisrowno{0}}, y expr={1e6*\thisrowno{1}}] {tikz/data/EL_osc_sol_cond_core_euler.txt};
    \addplot[line width=1pt, color=TUDa-2a] table[x expr={1e6*\thisrowno{0}}, y expr={1e6*\thisrowno{1}}] {tikz/data/EC_osc_sol_cond_core_euler.txt};
    \addplot[line width=1pt, color=TUDa-7a] table[x expr={1e6*\thisrowno{0}}, y expr={1e6*\thisrowno{1}}] {tikz/data/Ediss_osc_sol_cond_core_euler.txt};

    \end{axis}
\end{tikzpicture}\label{fig:oscillator_sol_cond_core_euler}}
    \subfloat[\footnotesize Solid conductor trapezoidal]{\begin{tikzpicture}
       \useasboundingbox (-1.1cm,-0.9cm) rectangle (6.1cm,5.7cm);
    \begin{axis}[
        xlabel={Time (\si{\micro\second})},
        ylabel={Energy (\si{\micro\joule})},
        grid,
        width=6.7cm,
        height=5.5cm,
        ymin=-1,
        ymax=53,
        xmin=0,
        xmax=51,
        legend style={at={(0.5,1.8)}, anchor=north, font=\footnotesize},
        y label style={xshift=0cm, yshift=-0.4cm, font=\footnotesize},
        x label style={yshift=0.1cm, font=\footnotesize},
        ytick={0,10,20,30,40,50},
        tick label style={font=\footnotesize},
    ]
    
    \addplot[line width=1pt, color=black] table[x expr={1e6*\thisrowno{0}}, y expr={1e6*\thisrowno{1}}] {tikz/data/Etot_osc_sol_cond_core_trap.txt}; 
    \addplot[line width=1pt, color=TUDa-9a] table[x expr={1e6*\thisrowno{0}}, y expr={1e6*\thisrowno{1}}] {tikz/data/EL_osc_sol_cond_core_trap.txt};
    \addplot[line width=1pt, color=TUDa-2a] table[x expr={1e6*\thisrowno{0}}, y expr={1e6*\thisrowno{1}}] {tikz/data/EC_osc_sol_cond_core_trap.txt};
    \addplot[line width=1pt, color=TUDa-7a] table[x expr={1e6*\thisrowno{0}}, y expr={1e6*\thisrowno{1}}] {tikz/data/Ediss_osc_sol_cond_core_trap.txt};

    \end{axis}
\end{tikzpicture}\label{fig:oscillator_sol_cond_core_trap}}
    \caption{
        The energy of the oscillator with a conducting core when
        \protect\subref{fig:oscillator_str_cond_core_euler} stranded conductor with the implicit Euler method, \protect\subref{fig:oscillator_str_cond_core_trap} stranded conductor with the trapezoidal rule,  \protect\subref{fig:oscillator_sol_cond_core_euler} solid conductor with the implicit Euler method, and \protect\subref{fig:oscillator_sol_cond_core_trap} solid conductor with the trapezoidal rule are used. The Hamiltonian contains again the contributions from the magnetic and the capacitor energies.}
    \label{fig:oscillator_cond_core}
\end{figure}

Let us investigate the oscillator circuit based on the stranded conductor model \eqref{eq:stranded} without losses, i.e., $\massmatrix=\boldsymbol{0}$ and $\resistancematrix_\text{str}=\boldsymbol{0}$, in more detail. In this case, an equivalent circuit model can be found by eliminating the coefficients of the magnetic vector potential and its closed-form solution $\textbf{z}_\textrm{ref}\transpose=\begin{bmatrix}
\phi_\textrm{ref} & i_\textrm{ref}\end{bmatrix}$ reads
\begin{align*}
\phi_\textrm{ref}(t) &= v_0 \cos(\omega t) + \frac{i_0}{C \, \omega} \sin(\omega t), \\
i_\textrm{ref}(t) &= i_0 \cos(\omega t) - \frac{v_0}{L \, \omega} \sin(\omega t)
\intertext{with (constant) total energy}
\hamiltonian_0 &= \frac{1}{2}Cv_0^2 + \frac{1}{2}Li_0^2,
\end{align*}
where $\omega = 1/\sqrt{LC}$ and $L=\Xstr\transpose\stiffnessmatrix^{+}\Xstr$. 
Equipped with this reference, the numerical solution is examined for different time integration methods. In addition to the previously used implicit Euler method and trapezoidal rule, we apply the second-order backward differentiation formula (BDF2), the Radau~IIA method of order 5, and the Gauss method of order 4 (Gauss4). For details on Radau~IIA and Gauss4, we refer to~\cite{Hairer_1996aa}. Note that only the trapezoidal rule and the Gauss method are energy-conserving.

Figure~\ref{fig:osc_convergence} shows the convergence of the numerical solution towards the reference and the energy conservation for the different time integrators. The errors are measured according to
\begin{align}\label{eq:errors}
\epsilon_{\mathbf{z}} &= \max_k\;\bigl\|
\textbf{z}_\textrm{ref}(t_k)-\textbf{z}_k
\bigr\|_\infty
\quad\text{and}\quad
\epsilon_{\hamiltonian}=\left|\hamiltonian_f - \hamiltonian_0 \right|\,/\,\hamiltonian_0,
\end{align}
where, by slight abuse of notation, $\hamiltonian_k$ and $\textbf{z}_k$ refer to the numerical solution and Hamiltonian evaluated at time $t_k$, respectively.
The expected convergence orders are observed and the energy conserving methods are accurate in respect to the energy up to machine precision. For very small step sizes the convergence of the high order methods saturates due to roundoff errors. Consequently, Radau~IIA never reaches energy conservation up to machine precision.

\begin{figure}
    \centering
    \subfloat[]{\begin{tikzpicture}
      \useasboundingbox (-1.1cm,-0.9cm) rectangle (6.1cm,5.7cm);
    \begin{axis}[
        xlabel={Time step (\si{\second})},
        ylabel={Solution error $\epsilon_{\mathbf{z}}$},
        xmode=log,
        ymode=log,
        grid,
        width=6.7cm,
        height=5.5cm,
        ymax=1000,
        xmin=1.6e-9,
        xmax=1.2e-6,
        legend style={at={(1.2,1.38)}, anchor=north,
        font=\footnotesize},
        legend columns=5,
        y label style={xshift=0cm, yshift=0cm, font=\footnotesize},
        x label style={yshift=0.1cm, font=\footnotesize},
        ytick={1e-15,1e-12,1e-9,1e-6,1e-3,1,1000},
        tick label style={font=\footnotesize},
    ]
    
    \addplot[line width=1pt, color=black,  mark=o] table[x expr={\thisrowno{0}}, y expr={\thisrowno{1}}] {tikz/data/relerr_sol_implicit_euler.txt}; \addlegendentry{Euler\phantom{m}}
    \addplot[line width=1pt, color=TUDa-9a,  mark=square] table[x expr={\thisrowno{0}}, y expr={\thisrowno{1}}] {tikz/data/relerr_sol_bdf2.txt}; \addlegendentry{BDF2\phantom{m}}
    \addplot[line width=1pt, color=TUDa-2a,  mark=triangle] table[x expr={\thisrowno{0}}, y expr={\thisrowno{1}}] {tikz/data/relerr_sol_radau.txt}; \addlegendentry{Radau IIA\phantom{m}}
    \addplot[line width=1pt, color=TUDa-7a,  mark=diamond] table[x expr={\thisrowno{0}}, y expr={\thisrowno{1}}] {tikz/data/relerr_sol_trapezoidal_rule.txt}; \addlegendentry{Trapezoidal\phantom{m}}
    \addplot[line width=1pt, color=TUDa-11a,  mark=x] table[x expr={\thisrowno{0}}, y expr={\thisrowno{1}}] {tikz/data/relerr_sol_gauss_legendre4.txt}; \addlegendentry{Gauss4}

    \begin{scope}
        \begin{scope}
            \draw (axis cs: 5e-8,20) -- (axis cs: 1.25e-8,5) -- node[pos=0.5,anchor=east,font=\tiny,xshift={1mm}]{1} (axis cs: 1.25e-8,20) -- node[pos=0.5,anchor=north,font=\tiny,yshift={3.5mm}]{1} (axis cs: 5e-8,20);
            \draw (axis cs: 5e-8,5e-4) -- (axis cs: 2e-8,5e-4) -- node[pos=0.5,anchor=north,font=\tiny,xshift={0mm}]{1} (axis cs: 5e-8,3.1e-3) -- node[pos=0.5,anchor=west,font=\tiny,xshift={-1mm}]{2} (axis cs: 5e-8,5e-4);
            \draw (axis cs: 9.5e-8,2e-6) -- (axis cs: 5e-8,1.53e-7) -- node[pos=0.5,anchor=east,font=\tiny,xshift={1mm}]{4} (axis cs: 5e-8,2e-6) -- node[pos=0.5,anchor=north,font=\tiny,yshift={3.5mm}]{1} (axis cs: 9.5e-8,2e-6);
            \draw (axis cs: 9.5e-8,1e-11) -- (axis cs: 5e-8,1e-11) -- node[pos=0.5,anchor=north,font=\tiny,yshift={-1.5mm}]{1} (axis cs: 9.5e-8,2.48e-10) -- node[pos=0.5,anchor=west,font=\tiny,xshift={-1mm}]{5} (axis cs: 9.5e-8,1e-11);
        \end{scope}
    \end{scope}

    \end{axis}
\end{tikzpicture}\label{fig:osc_convergence_solution}}
    \subfloat[]{\begin{tikzpicture}
        \useasboundingbox (-1.1cm,-0.9cm) rectangle (6.1cm,5.7cm);
    \begin{axis}[
        xlabel={Time step (\si{\second})},
        ylabel={Conservation error $\epsilon_{\hamiltonian}$},
        xmode=log,
        ymode=log,
        grid,
        width=6.7cm,
        height=5.5cm,
        xmin=1.6e-9,
        xmax=1.2e-6,
        legend style={at={(0.5,1.8)}, anchor=north, font=\footnotesize},
        y label style={xshift=0cm, yshift=0cm, font=\footnotesize},
        x label style={yshift=0.1cm, font=\footnotesize},
        ytick={1e-15,1e-12,1e-9,1e-6,1e-3,1,1000},
        tick label style={font=\footnotesize},
    ]
    
    \addplot[line width=1pt, color=black,  mark=o] table[x expr={\thisrowno{0}}, y expr={\thisrowno{1}}] {tikz/data/relerr_e_implicit_euler.txt};
    \addplot[line width=1pt, color=TUDa-9a,  mark=square] table[x expr={\thisrowno{0}}, y expr={\thisrowno{1}}] {tikz/data/relerr_e_bdf2.txt}; 
    \addplot[line width=1pt, color=TUDa-2a,  mark=triangle] table[x expr={\thisrowno{0}}, y expr={\thisrowno{1}}] {tikz/data/relerr_e_radau.txt};
    \addplot[line width=1pt, color=TUDa-7a,  mark=diamond] table[x expr={\thisrowno{0}}, y expr={\thisrowno{1}}] {tikz/data/relerr_e_trapezoidal_rule.txt};
    \addplot[line width=1pt, color=TUDa-11a,  mark=x] table[x expr={\thisrowno{0}}, y expr={\thisrowno{1}}] {tikz/data/relerr_e_gauss_legendre4.txt}; 
    
    \end{axis}
\end{tikzpicture}\label{fig:osc_convergence_energy}}
    \caption{
        The convergence of 
        \protect\subref{fig:osc_convergence_solution} the solution and \protect\subref{fig:osc_convergence_energy} the energy conservation for different time integrators. Error measures are defined in \eqref{eq:errors}.
        }
    \label{fig:osc_convergence}
\end{figure}

\subsection{Index 2 System}
\label{sec:numerics_index2}

To demonstrate the applicability of our theory for more complicated DAEs, an index 2 system is examined. The oscillator circuit of Section~\ref{sec:numerics_oscillator} is modified by adding a voltage source. Adding a voltage source in parallel to the circuit in Figure~\ref{fig:oscillator_circuit} yields a DAE with index 2 \cite{Cor20}. The simulation parameters are kept the same as in the oscillator example with the exception that now the initial capacitor voltage is set to $v_0=\SI{0}{\volt}$. The voltage source produces a sinusoidal voltage with the amplitude of $\SI{1}{\volt}$ and frequency of $\SI{50}{\kilo\hertz}$. For this subsection, we restrict ourselves to the case of a stranded conductor with a nonconducting iron core.

This model fits also to the structure \eqref{eq:moregeneralstructure}, following again Theorem~\ref{thm:mainResult}. Figure~\ref{fig:index2} shows the energy balance of the system, once with a time-integrator conserving the energy preserving structure (the trapezoidal rule) and once with a non-preserving time-integrator (the implicit Euler method). 
The Euler method yields a solution which violates the energy balance as the total energy brought to the system becomes larger than the Hamiltonian as time progresses.
With the trapezoidal rule the energy brought into the system and the Hamiltonian stay equal. The energy brought into the system $E_{\text{in}}$ is the apparent power of the voltage source $v_{\text{src}}$$i_{\text{src}}$ integrated over time. The numerical integration is not exact but the error is significantly smaller than the energy mismatch of the Euler method as is shown in Figure~{\ref{fig:index2}}.

\begin{figure}
    \centering
    \subfloat[]{\begin{tikzpicture}
      \useasboundingbox (-1.1cm,-0.9cm) rectangle (6.1cm,5.7cm);
    \begin{axis}[
        xlabel={Time (\si{\micro\second})},
        ylabel={Energy (\si{\micro\joule})},
        grid,
        width=6.7cm,
        height=5.5cm,
        ymin=-1,
        ymax=230,
        xmin=0,
        xmax=51,
        legend style={at={(1.2,1.38)}, anchor=north,
        font=\footnotesize},
        legend columns=2,
        y label style={xshift=0cm, yshift=-0.2cm, font=\footnotesize},
        x label style={yshift=0.1cm, font=\footnotesize},
        ytick={0,50,100,150,200},
        tick label style={font=\footnotesize},
    ]
    
    \addplot[line width=1pt, color=black] table[x expr={1e6*\thisrowno{0}}, y expr={1e6*\thisrowno{1}}] {tikz/data/index2_H_euler.txt}; \addlegendentry{$\hamiltonian$\phantom{m}}
    \addplot[line width=1pt, color=TUDa-9a, dashed] table[x expr={1e6*\thisrowno{0}}, y expr={1e6*\thisrowno{1}}] {tikz/data/index2_Ein_euler.txt}; \addlegendentry{$E_{\text{in}}$\phantom{m}}

    \end{axis}
\end{tikzpicture}\label{fig:index2_euler}}
    \subfloat[]{\begin{tikzpicture}
        \useasboundingbox (-1.1cm,-0.9cm) rectangle (6.1cm,5.7cm);
    \begin{axis}[
        xlabel={Time (\si{\micro\second})},
        ylabel={Energy (\si{\micro\joule})},
        grid,
        width=6.7cm,
        height=5.5cm,
        ymin=-1,
        ymax=230,
        xmin=0,
        xmax=51,
        legend style={at={(0.5,1.8)}, anchor=north, font=\footnotesize},
        y label style={xshift=0cm, yshift=-0.2cm, font=\footnotesize},
        x label style={yshift=0.1cm, font=\footnotesize},
        ytick={0,50,100,150,200},
        tick label style={font=\footnotesize},
    ]
    
    \addplot[line width=1pt, color=black] table[x expr={1e6*\thisrowno{0}}, y expr={1e6*\thisrowno{1}}] {tikz/data/index2_H_trap.txt}; 
    \addplot[line width=1pt, color=TUDa-9a, dashed] table[x expr={1e6*\thisrowno{0}}, y expr={1e6*\thisrowno{1}}] {tikz/data/index2_Ein_trap.txt};

    \end{axis}
\end{tikzpicture}\label{fig:index2_trap}}
    \caption{
        The Hamiltonian and the energy supplied by the voltage source for the index 2 model
        \protect\subref{fig:index2_euler} with the implicit Euler method and \protect\subref{fig:index2_trap} with the trapezoidal rule.}
    \label{fig:index2}
\end{figure}

\subsection{Nonlinear Three-Phase Transformer}
\label{sec:numerics_transformer}
As a more realistic example, we examine a 3D model of a three-phase transformer. The model implementation is based on the GetDP model for the Team problem~32 \cite{Guerin17,Bottauscio_2010aa}. The primary and secondary windings are modeled as stranded conductors, and the number of turns is 200 and 100 for the primary and secondary, respectively. The iron core is assumed to be nonconducting, and its nonlinear reluctivity is given as
\begin{equation*}
    \nu\, \bigl(\norm{\vec{B}}\bigr) 
    = 100 + 10\, \operatorname{exp}\bigl({1.8\, \norm{\vec{B}}^{2}}\bigr),
\end{equation*}
where $\norm{\vec{B}}$ is the norm of the magnetic flux density. The classical Newton--Raphson method is carried out each time step until tolerances are met. A tree-cotree gauge is used to ensure the uniqueness of the field solution, see Section~\ref{sec:conductor}. Figure~\ref{fig:transformer_example} shows the three-phase circuit connections and the 3D geometry of the transformer. On the primary side, a line to line root mean square voltage of \SI{1}{\kilo\volt} at \SI{50}{\hertz} frequency is set.

\begin{figure}
	\captionsetup{justification=centering}
	\centering
	\subfloat[]{\scalebox{0.7}{\includegraphics{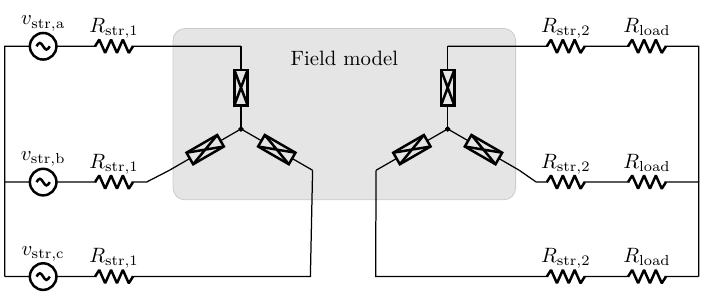}}\label{fig:trafo_circuit}}
    \subfloat[]{\includegraphics[width=0.38\textwidth, trim=-2em 0em 0em 0em, clip]{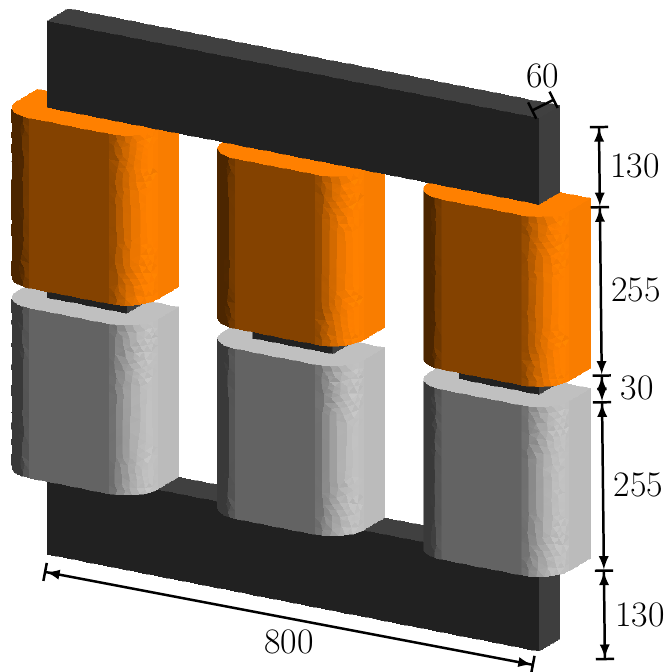}\label{fig:trafo_fe_geometry}}
    \caption{\protect\subref{fig:trafo_circuit} Star connected primary and secondary windings. The resistance values in the circuit are set to $R_{\text{str,1}}=\SI{0.4}{\ohm}$, $R_{\text{str,2}}=\SI{0.2}{\ohm}$, and $R_{\text{load}}=\SI{5}{\ohm}$. \protect\subref{fig:trafo_fe_geometry} 3D modeling domain of the transformer. The dimensions are given in mm.}
	\label{fig:transformer_example}
\end{figure} 

The Hamiltonian of the transformer model is equal to the magnetic energy. Additionally, there is dissipation in the resistors, and the energy brought into the system by voltage sources. The conservation of energy is evaluated by observing the energies brought into and dissipated in the system. The energy brought into the system contains the magnetic energy and the dissipated energy and, consequently, when the magnetic energy is subtracted from the supplied energy, the result is equal to the dissipation in the resistors. This is the criterion to evaluate the energy conservation with the transformer model.

Figure~\ref{fig:trafo_energies} illustrates the energy balance in the transformer when using the midpoint rule and the implicit Euler method. The midpoint rule preserves the total energy well, though not to machine precision. This deviation is not unexpected, as the underlying model violates some of the theoretical assumptions, for instance, due to its nonlinear material characteristics and introduces numerical errors due to quadrature and Newton--Raphson iterations, cf.~\cite{Maier_2025aa}.

The transformer model has a total of \SI{99500}{} degrees of freedom. With a time step of \SI{1}{\milli\second}, the simulation times for the implicit Euler and the midpoint rule were \SI{19}{\minute} and \SI{30}{\minute}, respectively. With the implicit Euler method a total of \SI{229}{} Newton iterations were required whereas the midpoint rule required \SI{321}{}. The simulations were run in a cluster environment with \SI{32}{} threads. We emphasize that the difference in the computation times is not only due to the different time integration. The different model implementations, e.g., the extent to which user-defined and precompiled functions are used, do not allow for a reliable direct comparison of the time integration methods.

\begin{figure}
    \captionsetup{justification=centering}
    \centering
    \begin{tikzpicture}
      \useasboundingbox (-1.4cm,-0.9cm) rectangle (7.0cm,4.3cm);
    \begin{axis}[
        xlabel={Time (\si{\milli\second})},
        ylabel={Energy error (\si{\joule})},
        grid,
        width=8.0cm,
        height=5.5cm,
        ymin=8e-2,
        ymax=500,
        xmin=0,
        xmax=61,
        ymode=log,
        legend style={at={(0.72,0.7)}, anchor=north,
        font=\footnotesize},
        y label style={xshift=0cm, yshift=-0.2cm, font=\footnotesize},
        x label style={yshift=0.1cm, font=\footnotesize},
        tick label style={font=\footnotesize},
    ]
    
    \addplot[line width=1pt, color=black] table[x expr={1e3*\thisrowno{0}}, y expr={abs(\thisrowno{1}-\thisrowno{2})}] {tikz/data/Ediff_trafo_midpoint.txt}; \addlegendentry{Midpoint \phantom{i}}
    \addplot[line width=1pt, color=TUDa-9a] table[x expr={1e3*\thisrowno{0}}, y expr={abs(\thisrowno{1}-\thisrowno{2})}] {tikz/data/Ediff_trafo_euler.txt}; \addlegendentry{Euler \phantom{i}}

    \end{axis}
\end{tikzpicture}
    \caption{Absolute error in the energy of the transformer model when the midpoint rule and the implicit Euler are used for time integration.}
    \label{fig:trafo_energies}
\end{figure}
\section{Summary}
The paper develops a unified modeling and discretization framework for coupled field--circuit systems using an energy-based approach inspired by the pH formalism. It builds upon a recent formulation for DAEs, extending it to handle additional algebraic components in the state variables and enabling the structure-preserving interconnection of electric circuits and electromagnetic field models in a consistent framework. 
The key findings of this paper are: 
\begin{itemize}
    \item
    The considered representation extends a recently introduced energy-based formulation, sharing many of the properties. 
    In particular, it guarantees a dissipation inequality and is preserved under power-preserving and dissipative interconnections. Furthermore, for a particular subclass with quadratic Hamiltonian, the implicit midpoint rule leads to a dissipation inequality on the time-discrete level.
    \item The mentioned subclass encompasses some relevant systems from electrical engineering, including (linear) stranded, solid, and foil conductor models as well as a linear circuit model based on MNA.
    For the latter, the new energy-based representation turns out to be particularly useful.
    The interconnection property of the energy-based structure allows to couple different conductors and circuits while maintaining the structure.
    \item The numerical experiments for an oscillator circuit with stranded and solid conductors illustrate the theoretical findings and show that the implicit midpoint rule leads to a satisfaction of the energy balance on the time-discrete level.
    In contrast, applying the implicit Euler method leads to unphysical results, as expected, with a clear violation of the energy balance.
    \item The numerical experiment of a three-phase transformer demonstrates that the implicit midpoint rule yields also good results in terms of the energy balance when applied to a more realistic system which does not satisfy the assumptions considered in this paper. Again, the implicit Euler method leads to significantly worse results in terms of the energy balance.
    More precisely, the maximum error in the energy balance for the implicit Euler method is more than two orders of magnitude larger than for the implicit midpoint rule.
\end{itemize}
The findings of this paper provide motivation for further research including the derivation of energy-based representations for nonlinear electromagnetic field--circuit coupled models used, e.g., in the design of transformers and electrical machines.

\section*{CRediT authorship contribution statement}
\textbf{Robert Altmann}: Conceptualization, Writing – original draft, Writing – review \& editing. \textbf{Idoia Cortes Garcia}: Conceptualization, Writing – original draft, Writing – review \& editing. \textbf{Elias Paakkunainen}: Conceptualization, Methodology, Software, Visualization, Writing – original draft, Writing – review \& editing. \textbf{Philipp Schulze}: Conceptualization, Writing – original draft, Writing – review \& editing. \textbf{Sebastian Schöps}: Conceptualization, Software, Writing – original draft, Writing – review \& editing.

\section*{Acknowledgments}
This work was partially supported by the joint DFG/FWF Collaborative Research Centre CREATOR (DFG: Project-ID 492661287/TRR 361; FWF: 10.55776/F90) at TU Darmstadt, TU Graz and JKU Linz and by the Graduate School CE within Computational Engineering at the Technical University of Darmstadt. The work of Philipp Schulze is supported by the DFG Collaborative Research Centre 1294: Data Assimilation – The Seamless Integration of Data and Models (Project-ID 318763901).

\bibliographystyle{elsarticle-num} 
\bibliography{bibtex,literature} 
\end{document}